\newtheorem{remark}{Remark}
\numberwithin{equation}{section}
\author[1]{Nathan Couchet}
\affil[1]{Université Clermont Auvergne, CNRS, LMBP, F-63000 Clermont-Ferrand, France\\ \texttt{nathan.couchet@uca.fr}}
\author[2]{Robert Yuncken \thanks{This article/publication is based upon work from COST Action CaLISTA CA21109 supported by COST (European Cooperation in Science and Technology). www.cost.eu.} }
\affil[2]{Institut Élie Cartan de Lorraine, Université de Lorraine, CNRS, IECL, F-57000 Metz, France \\ \texttt{robert.yuncken@univ-lorraine.fr}}
\title{ A groupoid approach to the Wodzicki residue}
\date{}
\newcommand{\C}{\mathbb{C}} 
\newcommand{\R}{\mathbb{R}}
\newcommand{\V}{\mathcal{V}}
\newcommand{\m}{\medbreak} 
\newcommand{\bg}{\bigbreak}
\newcommand{\N}{\mathbb{N}}
\newcommand{\Hn}{\mathbb{H}_{n}}
\newcommand{\Exp}{\mathbb{E}\mathrm{xp}^{\overline{X}} }
\newcommand{\TM}{\mathbb{T} M}
\newcommand{\THM}{\mathbb{T}_HM}
\newcommand{\kgt}{\tilde{\mathbbm{k}}}
\newcommand{\kg}{\mathbbm{k}}
\newtheorem{theoreme}{Theorem}[section]
\newtheorem{lemma}[theoreme]{Lemma}
\newtheorem{corollary}[theoreme]{Corollary}
\theoremstyle{definition}
\newtheorem{definition}[theoreme]{Definition} 
\begin{document}

\maketitle

 
\textbf{\textsc{Keywords}}: Wodzicki residue, pseudodifferential calculus, classical symbol, tangent groupoid, filtered manifold, non-commutative geometry. 

\textbf{\textsc{MSC:}}  Primary: 47G30; secondary: 22A22, 35S05, 58H05, 58J40. 
 
\begin{abstract}
Originally, the noncommutative residue was studied in  the 80's by Wodzicki in his thesis \cite{wodzicki1984LocalInvariantSpectralSym} and Guillemin \cite{guillemin1985newProofsWeylsformula}. In this article we give a definition of the Wodzicki residue, using the langage of $r$-fibered distributions from \cite{lescure2017convolution}, \cite{Yuncken2019groupoidapproach}, in the context of filtered manifolds. We show that this groupoidal residue behaves like a trace on the algebra of pseudodifferential operators on filtered manifolds and coincides with the usual residue Wodzicki in the case where the manifold is trivially filtered. Moreover, we show that the groupoidal residue coincides with Ponge's definition \cite{ponge2007residueHeisenberg} for contact and codimension 1 foliation Heisenberg manifolds and Dave-Haller's definition for general filtered manifolds.
\end{abstract}

\section{Introduction.}

One of the remarkable features of the theory of pseudodifferential operators is the noncommutative residue of Wodzicki, which he defined in 1984 in his thesis \cite{wodzicki1984spectralAssymetryNoncommRes}. The noncommutative residue was also defined in 1985 in a article of Guillemin, \cite[Definition 6 p 151]{guillemin1985newProofsWeylsformula}, in which he proposes to associate to an operator $P$ a zeta function:

\begin{equation}
\zeta(P,s) =\sum_{k} \lambda_k^s,
\end{equation}
where $\lambda_k$ are the eigenvalues of $P$, and then shows that it admits a meromorphic continuation. The residues of this zeta function are linked to the number of eigenvalues of $P$ denoted by $N(\lambda)=card \{ \lambda_k, ~ \lambda_k \leq \lambda \}$, leading Guillemin to a Weyl-type formula, as in \cite{weyl1911asymptotische}. 
 
Let $P$ be a classical pseudodifferential operator of ordrer $m \in \mathbb{Z}$ on a manifold $M$ of dimension $d$. This means that in any chart the symbol of $P$ admits an asymptotic expansion: 

\begin{equation}
a(x,\xi) \sim  \sum_k a_{m-k}(x,\xi).
\end{equation}

In the following discussion, we fix a chart $(U,\phi)$ and we identify $U$ with its image in $\R^d$.

\begin{definition}{ \cite[p 58]{vassout2001feuilletages} , \cite[1.8 Formule locale]{kassel1989residu}, \cite[§ 7]{wodzicki1984LocalInvariantSpectralSym}}  \label{751} \m
We define the residue at $x$ of a classical pseudodifferential operator $P$ on $M$ of order $m$ to be:
\begin{equation} \label{766}
Res_x^W(P)=\frac{1}{(2 \pi)^d} \Big( \int_{\mathbb{S}^{d-1}} a_{-d}(x,\xi) d \sigma(\xi) \Big) dx,
\end{equation}
where $a_{-d}(x,\xi)$ is the homogeneous part of order $-d$ in the variable $\xi$ coming from the asymptotic expansion of $P$ in any chart.
Moreover, if $M$ is a compact riemannian manifold, we define also the (global) residue of $P$:
\begin{equation} \label{790}
Res^W(P)=\int_M Res_x^W(P) dx,
\end{equation}
where $dx$ is the smooth measure coming from the riemannian structure.
\end{definition}

Note that the compact hypothesis is to make \eqref{790} valid.  The definition of the residue density $Res_x^W$ in \eqref{766}  makes sense for any $M$.

It is an important theorem that these quantitites are independent of the choice of chart. In this paper, we propose to give another definition of this residue for operators of order $-\dim(M)$, which extends naturally to filtered manifolds, using the groupoidal calculus from \cite{Yuncken2019groupoidapproach} --- that is using tangent groupoids $\TM, \THM$. We therefore manage to circumvent the chart machinery. Moreover our definition extends essentially without change to obtain the noncommutative residues of Ponge \cite{ponge2007residueHeisenberg} and Dave-Haller \cite{DH} of a pseudodifferential operator on any filtered manifold using the filtered tangent groupoid of \cite{choi2019tangent},\cite{VanErp2017groupoid}.

Indeed, the noncommutative residue which we define here can be directly connected with the noncommutative residue of Dave-Haller \cite{DH} by applying results of \cite{couchet2022polyhomo} to  Proposition 2 and Corollary 6 of \cite{DH}.  Thus, the current paper can be seen as a reinterpretation of the Ponge and Dave-Haller noncommutative residues by working directly on the filtered tangent groupoid.  This allows us to realize fundamental properties of the noncommutative residue, notably the invariance with respect to charts and the tracial property, as immediate consequences of the algebraic-geometric structure of the filtered tangent groupoid.  

We remark in passing that the noncommutative residue has proved to be connected with other geometrical objets. For instance :

\begin{enumerate}
\item There is link between this residue for a differential operator on a compact manifold and the asymptotic expansion of the trace of the heat operator $e^{-tP}$, see \cite{ackermann1996noteWodzickiRes, DH}.
\item Connes showed in 1988 - see \cite[Proposition 5 p313]{connes1994noncommutative}, \cite[section 2.6 p17]{cardona2020dixmiertracesdiscretepsido}, \cite[section 7.6]{graciabondia2001Elementsofnoncom}, \cite[Proposition 4.11 p 16]{ponge2021connesweyllaws} - that when $P$ is a pseudodifferential operator of order $-\dim(M)$ and $M$ is compact, then the Dixmier trace of $P$ coincides to this residue up to a constant. 
\item Ponge showed in \cite[Proposition 6.3 p 454]{ponge2007residueHeisenberg} the link between the residue of the Kohn-Laplacian $\square_b$ on a $CR$ compact manifold, see \cite[Equation (6.9)]{ponge2007residueHeisenberg},  and the volume of $M$, defined at \cite[Equation (6.4)]{ponge2007residueHeisenberg}.
\end{enumerate}

Let us very briefly recall the groupoid approach to pseudodifferential operators, first observed by Debord-Skandalis \cite{debord2014adiabatic} in 2014 and developped by van Erp and the second author some years later \cite{Yuncken2019groupoidapproach}. The tangent groupoid of Connes is:
\begin{equation}
\TM=M  \times M \times \R^* \bigcup TM \times \{ 0 \},
\end{equation}
which is seen as a smooth glueing of the tangent bundle $TM$ with a family of pair groupoids $M \times M$ over $\R^*$. In the filtered case the appropriate substitute of the tangent bundle $TM$ is a bundle of nilpotent osculating groups $\mathcal{T}_HM$ whose fibers are denoted by $\mathcal{T}_HM_x$ and we define the filtered tangent groupoid by:
\begin{equation}
\THM=M  \times M \times \R^* \bigcup \mathcal{T}_HM \times \{ 0 \}.
\end{equation}

\bg
The bundle of osculating groups admits a family of automorphism $(\delta_s)_{s >0}$ called dilations which generalises the homotheties on $\TM$ in the trivially-filtered case. 

Using these dilations we obtain a smooth $\R_+^*$-action on $\THM$, namely the Debord-Skandalis action \cite{debord2014adiabatic}, see also \cite{Yuncken2019groupoidapproach} where it is called the \textit{``zoom action''} and \cite{couchet2022polyhomo}:
\begin{definition} \label{799}
Let $M$ be a filtered manifold.
We define the Debord-Skandalis action of $\R_+^*$ on $\THM$, $s \in \R_+^* \mapsto \alpha_s \in Aut(\THM)$ by:
\begin{equation}
\left\{
    \begin{array}{ll}
     \alpha_s(y,x,t)=(y,x,s^{-1}t) ~~ (x,y) \in M,  \\
    \alpha_s(x,\xi,0)=(x,\delta_s(\xi),0) ~~ x \in M, ~ \xi \in\mathcal{T}_HM_x.
    \end{array}
\right.
\end{equation}
\end{definition}

The key point in the groupoidal approach to pseudodifferential operators \cite{Yuncken2019groupoidapproach} is to consider 
the set of distributions on the tangent groupoid $\THM$ which are essentially homogeneous for the Debord-Skandalis action in the following sense. A $r$-fibered distribution on $\THM$ is a continuous $C^\infty(M \times \R)$-linear map  $\kg : C^\infty(\THM) \rightarrow C^\infty(M \times \R)$. This definition implies that the support of such a distribution is $r$-proper and we denote them by $\mathcal{E}_r'(\THM)$. They were first studied by Androulidakis-Skandalis \cite{androulidakis2010pseudodifferential} and Lescure-Manchon-Vassout \cite{lescure2017convolution}  in the general case of submersions. Thanks to a result from \cite{lescure2017convolution}, $r$-fibered distributions can be seen as smooth maps $\kg$ from $M \times \R$ to  
compactly supported distributions in the $r$-fibers, 
$\kg(x,t) \in \mathcal{E}'(r^{-1}(x,t))$, and whose support is $r$-proper. We say that a $r$-fibered distribution is properly supported if its support is also $s$-proper. 

Recall that $C_p^\infty(\THM,\Omega_r)$ denotes the proper smooth sections of the 1-density bundle tangent to the $r$-fibers, see \cite[Definition 5.9]{couchet2022polyhomo}.
We say that a properly supported $r$-fibered distribution $\kg$ is essentially homogeneous of order $m$ for the Debord-Skandalis action if:
\begin{equation} \label{793}
s \in \R_+^* \mapsto s^{-m} \alpha_{s*} \kg - \kg \in  C_p^\infty(\THM,\Omega_r).
\end{equation}
We refer to the function appearing in \eqref{793} as the co-cycles of $\kg$. The set of essentially homogeneous distributions of this kind is denoted 
$\boldsymbol{\Psi}_{\text{vEY}}^m(\THM)$. We will refer to notations and concepts from \cite{Yuncken2019groupoidapproach} and \cite[section 5.3]{couchet2022polyhomo}. Thanks to this, van Erp and the second author define a pseudodifferential operator as the restriction at $t=1$ of an element $\kg \in \boldsymbol{\Psi}_{\text{vEY}}^m(\THM)$,  see \cite[Theorem 2 p3]{Yuncken2019groupoidapproach}. These elements are the $H$-pseudodifferential operators on the filtered manifold $M$ and are denoted by $\boldsymbol{\Psi}_{H}^m(M)$.
\bg
One of the main theorems of the article \cite{Yuncken2019groupoidapproach} is that the groupoidal calculus coincides, in the case of a trivially filtered manifold, with the classical calculus of Kohn-Nirenberg and Hörmander, namely:
\begin{equation} \label{780}
\boldsymbol{\Psi}_{\text{Hör}}^m(M)=\boldsymbol{\Psi}_{H}^m(M).
\end{equation}

A simple but important first observation of the present paper is the following. We will consider $m \in \mathbb{Z}$ in  the whole paper. If $M$ is a filtered manifold, we denote $d_H$ the homogeneous dimension of $M$. 
 Note that, if we fix a smooth measure $dx$ on $M$, we obtain a canonical smooth family of 1-densities $d \lambda_x$ on the tangent fibers $T_xM$ and hence on the osculating groups $\mathcal{T}_HM_x$. 


\begin{lemma} \label{759}
Let $M$ be a filtered manifold and $\kg \in \boldsymbol{\Psi}_{\text{vEY}}^{- d_H}(\THM)$. 
For every $x \in M$, the function defined by:

\begin{equation} \label{805}
s \mapsto \Big( s^{d_H} \alpha_{s*} \kg - \kg \Big)|_{(x,0,0)},
\end{equation}
is a group homomorphism from $(\R_+^*,\times)$ to  $(\C,+)$. More precisely,  there exists a constant $r_x \in \C$ such that for all $s>0$:
\begin{equation}
\Big( s^{d_H} \alpha_{s*} \kg - \kg \Big)|_{(x,0,0)}=r_x \log(s)d \lambda_x.
\end{equation}
\end{lemma}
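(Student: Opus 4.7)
The plan is to exploit the cocycle structure of essentially homogeneous distributions, then use the fact that the identity element of $\mathcal{T}_HM_x$ is fixed by $\alpha_s$ (with known Jacobian).

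\textbf{Step 1: Cocycle identity.} Set $\phi(s) := s^{d_H}\alpha_{s*}\kg - \kg$. By the essential homogeneity hypothesis \eqref{793}, $\phi(s) \in C_p^\infty(\THM,\Omega_r)$ for every $s>0$. A direct algebraic manipulation, using that $\alpha_{st} = \alpha_s \circ \alpha_t$, yields
\begin{equation}
\phi(st) = \phi(s) + s^{d_H}\alpha_{s*}\phi(t),
\end{equation}
as smooth $r$-fibered sections. I would verify this by expanding both sides and cancelling.

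\textbf{Step 2: Evaluation at the fixed point.} The $r$-fiber of $\THM$ over $(x,0) \in M\times\R$ is the osculating group $\mathcal{T}_HM_x$, and the point $(x,0,0)$ corresponds to its identity element. From Definition \ref{799}, $\alpha_s$ restricts on this fiber to the dilation $\delta_s$, which fixes the identity. Now I need to understand how $\alpha_{s*}$ acts on a smooth $1$-density in $\Omega_r$ evaluated at that fixed point: the pushforward of a density by a diffeomorphism fixing a point $p$ multiplies the density at $p$ by $|\det D\delta_s^{-1}(0)|$. Because the weighted homogeneity of the dilations $\delta_s$ on $\mathcal{T}_HM_x$ has total weight $d_H$, one has $|\det D\delta_s(0)| = s^{d_H}$, hence
\begin{equation}
\bigl(\alpha_{s*}\phi(t)\bigr)\big|_{(x,0,0)} = s^{-d_H}\,\phi(t)\big|_{(x,0,0)}.
\end{equation}
Substituting into the cocycle identity from Step 1 and evaluating at $(x,0,0)$ gives
\begin{equation}
\phi(st)\big|_{(x,0,0)} = \phi(s)\big|_{(x,0,0)} + \phi(t)\big|_{(x,0,0)}.
\end{equation}

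\textbf{Step 3: Solving the functional equation.} Using the reference $1$-density $d\lambda_x$ at the origin of $\mathcal{T}_HM_x$, write $\phi(s)|_{(x,0,0)} = g(s)\,d\lambda_x$ for a scalar function $g:\R_+^*\to\C$. Step~2 says $g$ is a homomorphism $(\R_+^*,\times)\to(\C,+)$. Since $s\mapsto\phi(s)$ is smooth (as a map into $C_p^\infty(\THM,\Omega_r)$, by essential homogeneity), $g$ is continuous, and hence of the form $g(s)=r_x\log(s)$ for a unique constant $r_x\in\C$. This delivers the claimed formula.

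\textbf{Main obstacle.} The only delicate point is Step~2: making rigorous the computation of the pushforward of the fiberwise density by $\alpha_{s*}$ at the fixed identity point. One must carefully reconcile the description of $\phi(s)$ as an element of $C_p^\infty(\THM,\Omega_r)$ with the pointwise evaluation as a density on $\mathcal{T}_HM_x$, and then apply the weighted-dilation Jacobian formula. Once this is set up correctly, the Jacobian $s^{d_H}$ cancels precisely the prefactor $s^{d_H}$ in the cocycle, which is exactly what makes the additivity work in the critical order $m=-d_H$.
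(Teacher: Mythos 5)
Your proposal is correct and follows essentially the same route as the paper: the cocycle identity $F_{st}=s^{d_H}\alpha_{s*}F_t+F_s$, evaluation at the fixed point $(x,0,0)$ using $\delta_{s*}(d\lambda_x)=s^{-d_H}d\lambda_x$ (your Jacobian computation is the same fact phrased differently), and then the continuous-homomorphism argument yielding $r_x\log(s)$. The only cosmetic difference is that the paper cites \cite[Lemma 21]{Yuncken2019groupoidapproach} for the smoothness of $s\mapsto F_s$ rather than asserting it directly.
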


We shall give the proof in the next section.
\bg
We may define the quantity $r_x dx$ to be the groupoidal residue of the pseudodifferential operator with kernel $\kg|_{t=1}$, which is indeed a polyhomogeneous pseudodifferential operator by \eqref{780}. As we shall see in Lemma \ref{797}, $r_x dx$ does not depend on the $r$-fibered distribution $\kg$ representing the pseudo-differential operator $P$ at $t=1$. Therefore we can define:  


\begin{definition} \label{726}
Let $M$ be a filtered manifold of homogeneous dimension $d_H$ and $P \in \bold{\Psi}_{H}^{m}(M)$ with $m \leq -d_H$. Let $\kg \in \bold{\Psi}_{vEY}^{-d_H}(\THM)$ be an element in vEY groupoidal calculus such that $\kg|_{t=1}$ is the Schwartz kernel of $P$. We define the groupoidal residue density of $P$ at $x \in M$, denoted $Res_x(P)$, to be:
\begin{equation} \label{727}
Res_x(P):= \frac{1 }{\log(s)} \Big( s^{d_H} \alpha_{s*} \kg - \kg \Big)|_{(x,0,0)},
\end{equation}
for any $s \in \R_+^* \setminus \{ 1 \}$.
\end{definition}

This definition should be compared with \cite[Corollary 6(d)]{DH}, where they prove a formula for the residue of a zeta function in terms of the coefficient of degree $-d_H$ in the asymptotic expansion for $P$.
The above definition is for scalar-valued operators. For operators between vector bundles, we should take, for any $s \in \R_+^* \setminus \{ 1 \}$:

\begin{equation} \label{798}
Res_x(P):= \frac{1 }{\log(s)} \mathrm{Tr} \Big( s^{d_H} \alpha_{s*} \kg - \kg \Big)|_{(x,0,0)}.
\end{equation}
For details on the groupoidal calculus with vector-bundle coefficients, see \cite{dave2022gradedHypoellipticity}. We will restrict our attention to scalar-valued operators for simplicity.
\bg
Now come the main results of this paper:
\begin{enumerate}
\item We shall show, see Theorem \ref{746}, that similarly to the Wodzicki residue $Res_x^W$, the groupoidal residue $Res_x$ from Definition \ref{726} defines a trace on operators of appropriate order.
More precisely if $P \in \bold{\Psi}_{H}^{m}(M)$ where $M$ is a filtered manifold and $m \leq -d_H$, $Q \in \bold{\Psi}_{H}^{0}(M)$ then :
\begin{equation} 
Res_x([P,Q]) =0,
\end{equation}
where $[~,~]$ denotes the commutator of operators.
\item  We shall show, see Theorem \ref{758}, that the groupoidal residue from Definition \ref{726} coincides with the Wodzicki residue in the case of a trivially filtered manifold.
\item  We shall show, in Section \ref{filtered_residue}, that the residue density defined of Definition \ref{726} coincides with the definition of the noncommutative residue made by Raphaël Ponge \cite{ponge2007residueHeisenberg} for $\V$-pseudo-differential operators in the calculus of BG on a contact or foliated manifold, \cite[section § 10]{Beals2016Heisenbergcalculus}, and more generally with the definition by Dave-Haller \cite{DH} of the residue for the filtered calculus on any filtered manifold.
\end{enumerate}

Definition \ref{726} only applies to pseudo-differential operators of order $\leq -d_H$. It should be possible to extend this definition to operators of arbitrary order. In the case of the classical unfiltered calculus, this will be treated in a forthcoming article by Higson, Sukochev and Zanin \cite{higson2023NoncommutativeResidue}. We learnt of their work while writing this article. We refer to that article for details. 


\subsection*{Acknowledgements} We would like to thank Dominique Manchon and Sylvie Paycha for their comments and suggestions on this work.

\section{Basic properties of the Wodzicki residue on a filtered manifold.}

Maintaining the notation of the introduction, let us start by proving Lemmas \ref{759} and \ref{797}, which show the well-definedness of our residue. Our conventions for the filtered tangent groupoid are such that the range and source maps are given by :

\begin{equation}
\left\{ \begin{array}{ll}
r(y,x,t)&=y, ~ s(y,x,t)=x, ~ t \neq 0 \\
r(x,v,0)&=x, ~ s(x,v,0)=x.
\end{array}
\right.
\end{equation}

\begin{proof}[Proof of Lemma \eqref{759}]
Set:
\begin{equation} \label{806}
F : s \in \R_+^* \mapsto F_s:=\Big( s^{d_H} \alpha_{s*} \kg - \kg \Big) \in C_p^\infty(\THM,\Omega_r).
\end{equation}
We will show that it is a group morphism when restricted at $(x,0,0)$. First, the reader can easily check that:
\begin{equation} \label{761}
F_{st}=s^{d_H} \alpha_{s*} F_t +F_s, ~ s,t >0.
\end{equation}

Thus we need to show:
\begin{equation} \label{760}
 s^{d_H} \alpha_{s*} F_t|_{(x,0,0)}=F_t|_{(x,0,0)}. 
\end{equation}
We are therefore going to prove \eqref{760}.
In the fiber $(x,.,0)$, we can write:
\begin{equation} \label{804}
 F_s |_{(x,.,0)}=f_x d \lambda_x,
\end{equation}
for some $f_x \in C_c^\infty(\mathcal{T}_HM_x)$ and where $d\lambda_x$ is a Haar measure on the  nilpotent (graded) osculating group $\mathcal{T}_HM_x$ such that $\delta_{s*}(d \lambda_x)=s^{-d_H} d\lambda_x$.  The Debord-Skandalis action at $t=0$ acts by the dilations $\delta_s$.
So for the fiber $(x,.,0)$ we have:
\begin{align}
\alpha_{s*}( F_s )|_{(x,.,0)} &=\delta_{s*}(F_s|_{(x,.,0)}) \\
 & \underbrace{=}_{\eqref{804}} \Big((\delta_s^{-1})^*f_x \Big)  \delta_{s*} (d\lambda_x) \\
& =s^{-d_H} \Big((\delta_s^{-1})^*f_x \Big)  d\lambda_x. \label{794}
\end{align}

Since the Debord-Skandalis action $\alpha_s$ fixes the points $(x,0,0)$, this gives 
$s^{d_H} \alpha_{s*} F_t|_{(x,0,0)}=F_t|_{(x,0,0)}$ as desired. Therefore, $s \mapsto F_s|_{(x,0,0)}$ is a morphism from $(\R_+^*,\times)$ to $(\C,+)$. Moreover, note that the map from \eqref{806} is smooth, see \cite[Lemma 21]{Yuncken2019groupoidapproach}.
\end{proof}


\begin{lemma} \label{797} 
Let $M$ be a filtered manifold and $P \in \boldsymbol{\Psi}_{H}^{m}(M)$, with $m \leq -d_H$.
The Definition \ref{726} of the groupoidal residue does not depend on the $r$-fibered distribution $\kg$ that represents $P$.
\end{lemma}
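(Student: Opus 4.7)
The plan is to reduce to the case $\kg := \kg_1 - \kg_2$, which by linearity lies in $\boldsymbol{\Psi}_{vEY}^{-d_H}(\THM)$ and satisfies $\kg|_{t=1} = 0$, since both $\kg_1$ and $\kg_2$ restrict to the Schwartz kernel of $P$. Applying Lemma~\ref{759} to $\kg$, the cocycle at $(x,0,0)$ takes the form $r_x \log(s)\, d\lambda_x$ for a single constant $r_x \in \C$, so it suffices to show that $r_x = 0$.

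First I would propagate the vanishing at $t=1$ using the essential homogeneity. The cocycle $R(s) := s^{d_H}\alpha_{s*}\kg - \kg$ lies in $C_p^\infty(\THM, \Omega_r)$ and is smooth in $s$. Because $\alpha_s$ acts trivially on the $r$-fibers over $\{t \ne 0\}$ (it only rescales the $t$-coordinate), restricting $R(s)$ to the fiber over $(x,1)$ yields $s^{d_H}\kg|_{t=s} = R(s)|_{t=1}$, so $\kg|_{t=s}$ is smooth along its $r$-fiber for every $s>0$, with smooth dependence on $s$.

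The main obstacle is to transfer this smoothness across $t=0$ near the unit section $x \mapsto (x,0,0)$ of $\mathcal{T}_HM$. I would appeal here to the structural description of the van~Erp--Yuncken calculus in \cite{Yuncken2019groupoidapproach}: the restriction $\kg|_{t=0}$ of an essentially homogeneous kernel of order $-d_H$ encodes, modulo smooth sections, the principal cosymbol of the operator $\kg|_{t=1}$. Since the latter is zero, its principal cosymbol vanishes and $\kg|_{t=0}$ must itself be smooth in a neighborhood of the unit section.

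Once $\kg|_{(x,\cdot,0)} = g_x(\cdot)\, d\lambda_x$ with $g_x$ smooth near $0 \in \mathcal{T}_HM_x$, the conclusion follows by repeating the calculation from the proof of Lemma~\ref{759}. Using that $\delta_s$ fixes the origin and satisfies $\delta_{s*}\, d\lambda_x = s^{-d_H}\, d\lambda_x$, one obtains
\begin{equation*}
\Big(s^{d_H}\alpha_{s*}\kg - \kg\Big)\Big|_{(x,0,0)} = s^{d_H}\cdot s^{-d_H}\, g_x(0)\, d\lambda_x - g_x(0)\, d\lambda_x = 0,
\end{equation*}
so $r_x = 0$ and Definition~\ref{726} depends only on the Schwartz kernel of $P$.
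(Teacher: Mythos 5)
Your proof is correct and follows essentially the same route as the paper: both reduce to the difference $\kg-\kg'$, invoke the key fact (Corollary 33 of \cite{Yuncken2019groupoidapproach}, which you phrase via the vanishing of the principal cosymbol) that this difference restricts at $t=0$ to a smooth density, and then evaluate the cocycle at the fixed point $(x,0,0)$ using $\delta_{s*}d\lambda_x=s^{-d_H}d\lambda_x$. The intermediate step about smoothness of $\kg|_{t=s}$ for $s>0$ is harmless but not needed.
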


\begin{proof}
Let $\kg,\kg' \in \bold{\Psi}_{vEY}^{-d_H}(\THM)$ such that:
\begin{equation}
\kg|_{t=1}=P, ~ \kg'|_{t=1}=P.
\end{equation} 
Let $x \in M$ be fixed.
We need to show that the two co-cycles \eqref{727} related to these $r$-fibered distributions give the same groupoidal residue at $x$. Thanks to \cite[Corollary 33]{Yuncken2019groupoidapproach}, we have:
\begin{equation} \label{796}
\kg|_{t=0}-\kg'|_{t=0} \in C_p^\infty(\mathcal{T}_HM,\Omega_r).
\end{equation}
Using \eqref{796}, there exists $f_x \in C_c^\infty(\mathcal{T}_HM_x)$ such that:
\begin{equation}
(\kg-\kg')|_{(x,.,0)}=f_x d\lambda_x,
\end{equation}

where $d\lambda_x$ is a left Haar measure on the nilpotent (graded) Lie group $\mathcal{T}_HM_x$. To conclude, remember that the Debord-Skandalis action fixes $(x,0,0)$. Indeed, we compute:
\begin{align}
\Big( s^{d_H} \alpha_{s*}( \kg-\kg') -( \kg-\kg')  \Big)|_{(x,0,0)} &=  s^{d_H} \alpha_{s*}( \kg-\kg')|_{(x,0,0)} - ( \kg-\kg')|_{(x,0,0)}  \\
& = s^{d_H} \delta_{s*}( f_x d\lambda_x) - f_x d\lambda_x \\ 
& = 0,
\end{align}
as in the previous proof.
\end{proof}

Now  we can conclude that the groupoidal residue is well-defined.
\bg
Let us now recall a basic fact about convolution Lie groups. In the following Lemma the commutators are in the sense of convolution product. Also recall that every connected nilpotent Lie group is unidomular, see \cite[Proposition 2.30]{folland2016harmonicanalysis} or \cite[Proposition 5.5.4 et Corollary 5.5.5]{faraut2008analysis}.

\begin{lemma} \label{757}
Let $G$ be a connected nilpotent Lie group $G$. Given  $f \in C_c^\infty(G)$ and $g \in \mathcal{E}'(G)$ then $[f,g](e)=0$.
\end{lemma}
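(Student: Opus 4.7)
The plan is to unwind the definition of the convolution commutator at the identity and use unimodularity of $G$ as the decisive ingredient.

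First I would fix a left-invariant Haar measure $d\mu$ on $G$ and write both convolutions explicitly. For the smooth, compactly-supported piece against the distribution one has, by definition,
\begin{equation}
(f*g)(e) = \langle g(y),\, f(y^{-1}) \rangle,
\qquad
(g*f)(e) = \langle g(y),\, f(y^{-1}) \rangle,
\end{equation}
provided one is careful about which convolution convention is used. To avoid any sign confusion, I would first do the calculation in the purely smooth case (approximating $g$ by a sequence $g_n \in C_c^\infty(G)$ via mollification, which is legitimate since $g \in \mathcal{E}'(G)$), where the two expressions become honest integrals
\begin{equation}
(f*g_n)(e) = \int_G f(y)\, g_n(y^{-1})\, d\mu(y),
\qquad
(g_n*f)(e) = \int_G g_n(y)\, f(y^{-1})\, d\mu(y).
\end{equation}

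The core step is then the change of variables $y \mapsto y^{-1}$. Because $G$ is connected and nilpotent, it is unimodular (as cited from Folland or Faraut--Kor\'anyi), so inversion preserves $d\mu$. Applying this change of variables to the first integral transforms $\int f(y) g_n(y^{-1}) d\mu(y)$ into $\int f(y^{-1}) g_n(y) d\mu(y)$, which coincides with $(g_n*f)(e)$. Hence $[f,g_n](e)=0$ for every $n$.

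Finally, I would pass to the limit $g_n \to g$ in $\mathcal{E}'(G)$. Since $f$ is smooth and compactly supported, the maps $h \mapsto (f*h)(e)$ and $h \mapsto (h*f)(e)$ are continuous on $\mathcal{E}'(G)$ (they are evaluations of $h$ against the test functions $y \mapsto f(y^{-1})$, possibly composed with translation, both of which are smooth and compactly supported), so the equality $(f*g_n)(e)=(g_n*f)(e)$ persists in the limit and yields $[f,g](e)=0$. The only potential obstacle is the bookkeeping of conventions for distributional convolution, but this is routine; the substantive input is simply unimodularity via the inversion change of variables.
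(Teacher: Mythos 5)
Your proof is correct and follows essentially the same route as the paper's: both arguments reduce $(f\star g)(e)$ and $(g\star f)(e)$ to the single pairing $\langle g(y), f(y^{-1})\rangle$, with unimodularity of the connected nilpotent group $G$ entering precisely through the inversion change of variables. Your mollification detour is only a stylistic device to confirm the distributional conventions and does not change the substance of the argument.
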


\begin{proof}
Recall that convolution on Lie group is defined by:
\begin{equation}
\underbrace{u \star v}_{\in ~ \mathcal{E}'(G)}=\underbrace{(u \otimes v)}_{\in ~ \mathcal{E}'(G \times G)} \circ ~ m^*,
\end{equation}
where  $u \in \mathcal{E}'(G), ~ v  \in \mathcal{E}'(G), ~  (m^*f)(x,y)=f(xy).$ We may also take $v \in \mathcal{D}'(G)$ and in this case $u \star v \in \mathcal{D}'(G)$, see  \cite[Theorem 5.1.1 p51 on $\R^d$]{friedlander1998introduction}.
The commutator is well-defined because $C_c^\infty(G)$ is a two-sided ideal in $\mathcal{E}'(G)$. 
Still inspired by the \cite[Theorem 5.2.1 p53 on $\R^d$]{friedlander1998introduction}, one can prove that for $f \in C_c^\infty(G)$ and $g \in \mathcal{E}'(G)$:
\begin{equation}
f \star g(x)= \langle g(y),f(xy^{-1}) \rangle ,
\end{equation}

and:
\begin{equation}
g \star f(x)= \langle g(y),f(y^{-1}x) \rangle.
\end{equation}
See that it is in the two previous equations that we use the unimodularness of $G$.
Then we have proved $f \star g(e)= g \star f(e)$,
that is $[f,g](e)=0$.



\end{proof}

\begin{theoreme} \label{746} \m
Let $M$ be a filtered manifold of homogeneous dimension $d_H$. Let $P \in \Psi_{H}^{m}(M)$  and $Q \in \Psi_{H}^{m'}(M)$ be two pseudodifferential operators $M$ with $m+m' \leq -d_H$. 
Then the groupoidal residue of Definition \ref{726} satisfies the trace property, that is, for all $x \in M$ :
\begin{equation} \label{747}
Res_x([P,Q]) =0.
\end{equation}

\end{theoreme}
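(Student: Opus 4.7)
The approach is to lift $P$ and $Q$ to $r$-fibered distributions $\kg_P \in \boldsymbol{\Psi}_{vEY}^{-d_H}(\THM)$ and $\kg_Q \in \boldsymbol{\Psi}_{vEY}^{0}(\THM)$ with $\kg_P|_{t=1}$ and $\kg_Q|_{t=1}$ equal to the Schwartz kernels of $P$ and $Q$. Their convolution products $\kg_P \star \kg_Q$ and $\kg_Q \star \kg_P$ both lie in $\boldsymbol{\Psi}_{vEY}^{-d_H}(\THM)$ (orders add, and proper support makes the convolution well-defined) and restrict at $t=1$ to the Schwartz kernels of $PQ$ and $QP$. Hence the difference $\kg_P \star \kg_Q - \kg_Q \star \kg_P$ is a valid representative of $[P,Q]$, and by Lemma~\ref{797} the residue $Res_x([P,Q])$ can be computed from its cocycle.

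The central algebraic step uses that the Debord--Skandalis zoom $\alpha_s$ is a groupoid automorphism of $\THM$, so pushforward commutes with convolution. Writing the cocycles
\begin{equation*}
s^{d_H}\alpha_{s*}\kg_P = \kg_P + F_s^P, \qquad \alpha_{s*}\kg_Q = \kg_Q + F_s^Q,
\end{equation*}
with $F_s^P, F_s^Q \in C_p^\infty(\THM, \Omega_r)$, I would expand $s^{d_H}\alpha_{s*}(\kg_P \star \kg_Q) = (\kg_P + F_s^P) \star (\kg_Q + F_s^Q)$ and symmetrically for $\kg_Q \star \kg_P$. After the $\kg_P \star \kg_Q$ and $\kg_Q \star \kg_P$ terms cancel in the difference, this produces the cocycle identity
\begin{equation*}
\big(s^{d_H}\alpha_{s*} - \mathrm{id}\big)\big(\kg_P \star \kg_Q - \kg_Q \star \kg_P\big) = [\kg_P, F_s^Q] + [F_s^P, \kg_Q] + [F_s^P, F_s^Q],
\end{equation*}
where each bracket is the convolution commutator of $r$-fibered distributions.

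It remains to evaluate at the identity point $(x,0,0)$ of the osculating fiber. Convolution on $\THM$ restricts fiberwise to group convolution on $\mathcal{T}_HM_x$, which is a connected nilpotent Lie group. Properness of supports implies that $\kg_P|_{(x,\cdot,0)}$ and $\kg_Q|_{(x,\cdot,0)}$ lie in $\mathcal{E}'(\mathcal{T}_HM_x)$, while $F_s^P|_{(x,\cdot,0)}$ and $F_s^Q|_{(x,\cdot,0)}$ are compactly supported smooth densities, identified with elements of $C_c^\infty(\mathcal{T}_HM_x)$ via the Haar measure $d\lambda_x$. Each of the three bracket terms on the right-hand side then pairs a smooth compactly supported function with a compactly supported distribution (or with another $C_c^\infty$ element) on the same nilpotent group, so Lemma~\ref{757} annihilates each of them at the identity $0 \in \mathcal{T}_HM_x$. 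The conclusion $Res_x([P,Q]) = 0$ then follows directly from Definition~\ref{726}.

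The main obstacle is not conceptual but technical: one must verify that $\alpha_{s*}$ commutes with groupoid convolution, that the convolutions involving the cocycles are well-defined (which will follow from proper support), and that restriction to the fiber $\mathcal{T}_HM_x$ at $t=0$ is compatible with convolution. These naturality statements should be available from the framework of \cite{lescure2017convolution} and \cite{Yuncken2019groupoidapproach}, after which the vanishing reduces transparently to a single application of Lemma~\ref{757}.
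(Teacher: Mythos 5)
Your proposal is correct and follows essentially the same route as the paper: lift to essentially homogeneous $r$-fibered distributions, use that the zoom action is a groupoid automorphism to expand the cocycle of the convolution products into convolution commutators of the kernels with the smooth cocycle densities, and kill each commutator at the fiber identity via Lemma~\ref{757} applied to the nilpotent osculating group. The only cosmetic difference is that you retain the extra term $[F_s^P,F_s^Q]$, which the paper's displayed computation silently absorbs but which vanishes by the same lemma.
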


\begin{proof} 
We denote by $\kg_P$ and $\kg_Q$ 
the two associated essentially homogeneous  $r$-fibered distributions of order $m$ and  $m'$ respectively. 
By definition, there exists $F,G \in C_p^\infty(\THM,\Omega_r)$ such that:
\begin{equation} \label{1994}
\begin{cases}
\alpha_{s*} \kg_P -s^{m} \kg_P & =F \\
\alpha_{s*} \kg_Q - s^{m'} \kg_Q & =G. 
\end{cases}
\end{equation}
We then compute:

\begin{align}
Res_x([P,Q]) &= Res_x(PQ)-Res_x(QP) \\
             &= \frac{s^{d_H} }{log(s)} \Big( \big( \alpha_{s*} (\kg_P \star \kg_Q) - s^{-d_H} \kg_P \star \kg_Q  \big)|_{(x,0,0)} \nonumber \\  & ~~~ - \big( \alpha_{s*} ( \kg_Q \star \kg_P) - s^{-d_H} \kg_Q \star \kg_P  \big)|_{(x,0,0)}  \Big) \label{792} \\
             & = \frac{s^{d_H}}{log(s)} \Big( \big(( s^{m} \kg_P + F) \star (s^{m'} \kg_Q +G) - s^{- d_H}\kg_P \star \kg_Q  \big)|_{(x,0,0)} \nonumber \\
              & ~~~ -   \big(( s^{m'} \kg_Q + G) \star (s^{m} \kg_P +F) - s^{-d_H}  \kg_Q \star \kg_P \big)|_{(x,0,0)} \Big) \label{789} \\
           & = \frac{s^{d_H}}{log(s)}  \Big( s^m \kg_P \star G +s^{m'} F \star \kg_Q - s^{m'} \kg_Q \star F -s^{m} G \star \kg_P +F \star G- G \star F \Big)|_{(x,0,0)} \\
          &= \frac{s^{d_H}}{log(s)}  \Big( s^{m}[\kg_P,G]+s^{m'}[\kg_Q,F]+[F,G] \Big)|_{(x,0,0)} \\
          & = 0. 
\end{align}

The equality in equation \eqref{789} is true since $\kg_P$ and $\kg_Q$  are essentially homogeneous $r$-fibered distributions respectively of order $m$ and $m'$ respectively and $\alpha_{s^*} : \mathcal{E}_r'(\THM) \rightarrow  \mathcal{E}_r'(\THM)$ is a groupoid automorphism for all $s>0$, see \cite[proposition 3.3.21 p 104]{couchet2023thesis}.
The last equality is true by virtue of Lemma  \ref{757} applied fiberwise to the osculating groups which are by definition connected (graded) nilpotent Lie groups. Indeed the convolution in the brackets $[\kg_P,g]$ and $[\kg_Q,f]$ are done fiberwise. In the fiber at $(x,0)$ the convolution is done between a distribution with compact support and a function on $\mathcal{T}_HM_x$ with compact support. 
\end{proof}

\begin{remark}
Theorem \ref{746} is in particular satisfied for $Q \in \Psi_{H}^{m'}(M)$ with $m'+m \leq -1-d_H$. In this case, we would have $PQ$ and $QP$ in the space $\Psi_{H}^{m+m'-1}(M)$. Therefore, the two $r$-fibered distributions $\kg_P \star \kg_Q$, $\kg_Q \star \kg_P$ would be two continuous functions, thanks to \cite[Theorem 52]{Yuncken2019groupoidapproach} and so the two co-cycles in \eqref{792} would be $0$, as again the Debord-Skandalis action fixes $(x,0,0)$. So it is only the critical case $m+m'=-d_H$ which remained to be proven.  For more details, see \cite[section 7.3 p 161]{couchet2023thesis}. We would like to kindly thank Pr.\ Sylvie Paycha for pointing out to us that Theorem \ref{746} was true for $m+m' \leq -d_H$, and not only for $m \leq -d_H$ and $m'=0$.
\end{remark}


\section{Pseudo-homogeneous functions and kernels}

It is well-known, see for instance \cite{hsiao2008boundaryintegraleq},\cite[p4]{Beals2016Heisenbergcalculus}, that classical pseudodifferential operators admit also a kernel expansion in terms of (pseudo)-homogeneous functions which is precisely linked to the asymptotic expansion of the symbol. Let us recall the details.

Consider $\R^d$ equipped with a one-parameter family of dilations $(\delta_s)_{s>0}$. Typically, these will come from a grading on $\R^d$ where $\delta_s$ acts on the subspace of graded degree $k$ by multiplication by $s^k$. We will be particularly interested in the trivial dilation structure:
 \begin{equation} \label{778}
\delta_s(\xi_1,...,\xi_d)=(s\xi_1,...,s\xi_d),
\end{equation}
and shall also encounter the Heisenberg dilation structure:
\begin{equation} \label{784}
\delta_s(\xi_0,\xi_1,...,\xi_d)=(s^2\xi_0,s\xi_1,...,s\xi_d).
\end{equation}


Now we can define:
\begin{definition}{ \cite[Definition 7.1.1 p353 ]{hsiao2008boundaryintegraleq},\cite[Definition (15.19)]{Beals2016Heisenbergcalculus} } \m
Let $U \subset \R^d$ be an open subset. We denote by $\mathcal{H}_G^m(U \times \R^d)$ the space of functions $f \in C^\infty(U \times \R^d \setminus \{ 0 \})$ homogeneous of order $m$ satisfying:
\begin{equation}
f(x,\delta_s(\xi))=s^m f(x,\delta_s(\xi)), ~ s>0,
\end{equation} 
where $\delta_s$ is as in \eqref{778} or \eqref{784}.
We define the set $\Psi \mathrm{hf}_G^m(U \times \R^d)$ of smooth pseudo-homogeneous functions of degree $m$ in the second variable as follows. If $m \notin \N$ then :
\begin{equation}
\Psi \mathrm{hf}_G^m(U \times \R^d)=\mathcal{H}_G^m(U \times \R^d).
\end{equation}
If $m \in \N$ then $\Psi \mathrm{hf}_G^m(U \times \R^d)$ is the set of $k \in C^\infty(U \times \R^d \setminus \{ 0 \})$ of the form:
\begin{equation}
\label{eq:Psihf}
k(x,\xi) = f(x,\xi)+\log(|\xi|)p(x,\xi),
\end{equation}
where $p$ is a homogeneous polynomial in $\xi$ of degree $m$ having $C^\infty$-coefficients in $x$, where the function $f \in \mathcal{H}_G^m(U \times \R^d)$ and $|~|$ is a homogeneous quasi-norm on $\R^d$, in the sense of dilations \eqref{778} or \eqref{784}.
\end{definition}

We also recall, see \cite{couchet2022polyhomo}:

\begin{definition} 
Let $U \subset \R^d$ be an open subset. We denote by $\mathcal{H} \mathcal{S}_G^m(U \times \R^d)$ the space of functions $f \in C^\infty(U \times \R^d)$ which are homogeneous of order $m$ modulo Schwartz-class meaning:
\begin{equation}
f(x,\delta_s(\xi)) - s^mf(x,\xi),
\end{equation}
is a Schwartz-class function of $\xi$, with smooth dependance on $x$.
\end{definition}

\begin{definition}{ \cite[Equation (7.1.2) p354 ]{hsiao2008boundaryintegraleq} \cite[Definition 3.5 p414]{ponge2007residueHeisenberg}, \cite[Equations (15.40)-(15.41)]{Beals2016Heisenbergcalculus} } \label{765} \m
A distribution kernel $k \in \mathcal{D}'(U \times U)$ is said to have a pseudo-homogeneous expansion of degree $m \in \R$ if:
\begin{equation} \label{773}
k \sim \sum_{j} k_{m+j},
\end{equation}
where $k_{m+j} \in \Psi \mathrm{hf}_G^{m+j}(U \times \R^d)$ and where the symbol $\sim$ means that for all $N \in \N$, there exists $J_N \in \N$:
\begin{equation}
k - \sum_{j \in J_N} k_{m+j} \in C^N(U \times U),
\end{equation}
where $C^N(U \times U)$ denotes the space of class $C^N$ functions. The space of kernels having a pseudo-homogeneous expansion of degree $m$ is denoted  $\Psi \mathrm{hk}_G^m(U)$.
\end{definition}

\begin{remark}
\label{rmk:degree_shift}
Our use of $\mathcal{D}'(U \times U)$ in the above definition follows the standard practice of most works on pseudodifferential operators. Stricly speaking, when we realize these as kernels in the groupoid calculus, we will need to replace these by $r$-fibred distributions :
\begin{equation}
k(x,y)dy \in \mathcal{D}_r'(U \times U),
\end{equation}
see \cite{lescure2017convolution}, \cite{Yuncken2019groupoidapproach}. The Lebesgue measure $dy$ is homogeneous of degree $-d$ with respect to the trivial dilation $\delta_s$ so this introduces a degree-shift in orders of kernels, see the next theorem. Thus, the kernel of a pseudodifferential operator on $U$ of order $m<0$ will be given by an element of $\Psi hk_G^{-m-d}(U)$. This is a well-known technical detail, and we will not remark on it further.
\end{remark}

We now state an important theorem, also see \cite[thm 7.1.1, 7.1.6, 7.1.7, 7.1.8]{hsiao2008boundaryintegraleq}.

\begin{theoreme}{Seeley 1969 \cite[theorem 1 p 209]{seeley2011topics}} \m \label{764}
Let $U \subset \R^d$ be an open subset and $m<0$. Then $P \in \boldsymbol{\Psi}_{\text{Hör}}^m(U)$ if and only if:
\begin{equation}
Pu(x)=\int_U k(x,x-y)u(y)dy, ~ u \in C_c^\infty(U),
\end{equation}
with Schwartz kernel satisfying $k \in \Psi \mathrm{hk}_G^{-m-d}(U)$. 
\end{theoreme}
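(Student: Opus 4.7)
The plan is to prove both directions by transporting asymptotic expansions through the oscillatory integral
\[
k(x,z) = (2\pi)^{-d} \int_{\R^d} e^{i z \cdot \xi} a(x,\xi) \, d\xi, \qquad z = x-y,
\]
which relates the symbol $a(x,\xi)$ of $P$ to its Schwartz kernel. The engine is the term-by-term Fourier analysis of homogeneous distributions; the only real subtlety is the bookkeeping at exceptional integer degrees, where logarithmic corrections appear.

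For the implication $\Rightarrow$, suppose $P \in \boldsymbol{\Psi}_{\text{Hör}}^m(U)$ has classical symbol $a \sim \sum_{j \geq 0} a_{m-j}$ with $a_{m-j}(x,\cdot)$ homogeneous of degree $m-j$ for $|\xi| \geq 1$. I would fix a smooth cutoff $\chi$ vanishing near $0$ and equal to $1$ on $|\xi| \geq 1$, so that $\chi \cdot a_{m-j}$ is a tempered distribution, and define
\[
k_{-m-d+j}(x,z) \;:=\; \mathcal{F}^{-1}_\xi\bigl(\chi(\xi) a_{m-j}(x,\xi)\bigr)(z)
\]
modulo a smooth error coming from the cutoff. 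By the standard formula for the Fourier transform of homogeneous distributions, this is an element of $\Psi \mathrm{hf}_G^{-m-d+j}(U \times \R^d)$: a homogeneous function of degree $-m-d+j$ when that value is not a non-negative integer, and a homogeneous function plus a $\log|z| \cdot p(x,z)$ correction (with $p$ a homogeneous polynomial) precisely when $-m-d+j \in \N$, the log terms coming from residues in the meromorphic continuation of the family $|\xi|^\alpha$. A standard estimate then shows that the Fourier transform in $\xi$ of the remainder $a - \sum_{j < J} a_{m-j}$, which is a symbol of order $m-J$, lies in $C^N(U \times U)$ once $J$ exceeds $N + d + |m|$, which delivers $k \sim \sum_j k_{-m-d+j}$ in the sense of Definition \ref{765}.

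For the implication $\Leftarrow$, I would reverse the process. Given $k \sim \sum_j k_{-m-d+j}$ with $k_{-m-d+j} \in \Psi \mathrm{hf}_G^{-m-d+j}(U \times \R^d)$, apply $\mathcal{F}_z$ to each pseudo-homogeneous piece: modulo Schwartz contributions (which are absorbed by the smoothing remainder), $\mathcal{F}_z(k_{-m-d+j})$ is a classical homogeneous symbol component of degree $m-j$, any $\log|z|$ factor correctly reproducing its homogeneous counterpart on the symbol side via the same analytic continuation argument. A Borel summation then assembles the components $a_{m-j}$ into a classical symbol $a \sim \sum_j a_{m-j}$ whose quantisation recovers $P$ modulo smoothing.

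The main obstacle is the fine bookkeeping at integer degrees, where naïve homogeneity breaks down and logarithmic corrections are forced. This is precisely why the definition of $\Psi \mathrm{hf}_G^m$ incorporates a $\log|\xi| \cdot p$ term when $m \in \N$. The cleanest framework is to treat $|\xi|^\alpha$ as a meromorphic family of tempered distributions and to read off the log-polynomial corrections from residues at its poles; with this in hand, both implications reduce to a direct Fourier-transform computation on the individual homogeneous pieces, which is the substance of Seeley's original argument.
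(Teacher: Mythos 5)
The paper does not prove this statement: it is quoted as Seeley's theorem, with the proof delegated to the cited references (Seeley's 1969 notes and Hsiao--Wendland, Theorems 7.1.1 and 7.1.6--7.1.8). So there is no ``paper proof'' to compare against; I can only assess your sketch on its own terms, and it is the standard argument that those references carry out. Your outline is correct: cut the symbol off near $\xi=0$ (the error $(1-\chi)a_{m-j}$, extended as a compactly supported distribution, has entire Fourier transform, so it is harmless), transform each homogeneous piece $a_{m-j}$ of degree $m-j<0$ into a distribution in $z$ that is homogeneous of degree $-m-d+j$ when $m-j\notin\{-d,-d-1,\dots\}$ and acquires a $\log|z|\cdot p(x,z)$ correction with $p$ a homogeneous polynomial of degree $-m-d+j$ exactly when $-m-d+j\in\N$ (H\"ormander, Theorem 3.2.4), and control the remainder of order $m-J$ by the usual $C^N$ estimate; the converse runs the same computation backwards, with the distributional terms supported at $\xi=0$ (derivatives of $\delta_0$ arising from the choice of homogeneous extension) discarded as smoothing contributions, followed by a Borel summation. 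The only places where a full write-up would need care are (i) verifying that the Fourier transforms of the pseudo-homogeneous kernel pieces actually satisfy the symbol estimates $|\partial_x^\alpha\partial_\xi^\beta a_{m-j}|\lesssim(1+|\xi|)^{m-j-|\beta|}$ uniformly on compacta in $x$, and (ii) checking that the difference between $P$ and the quantisation of the reassembled symbol is genuinely smoothing rather than merely $C^N$ for each $N$ after finitely many terms; both are routine and are exactly the content of the cited theorems in Hsiao--Wendland.
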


Moreover, in the case of the trivial dilations structure, the asymptotic expansion of the symbol:

\begin{equation}
a \sim \sum_{j=0}^\infty a_{m-j}, ~ a_{m-j} \in \mathcal{H}_G^{m-j}(U \times \R^d),
\end{equation}

and the kernel:

\begin{equation}
k \sim \sum_{j=0}^\infty k_{m+j}, ~ k_{m+j} \in \Psi hf_G^{m+j}(U \times \R^d),
\end{equation}

are related by an adapted Fourier transform as follows see \cite[Equation (7.1.81) p393]{hsiao2008boundaryintegraleq}. Take  $\psi \in C_c^\infty(\R^d)$ is any cut-off function satisfying:
\begin{equation}
\psi(z):=
\left\{ \begin{array}{lr}
1 & \mbox{if} ~ |z| \leq \frac{1}{2}, \\
0 & \mbox{if} ~ |z| >1.
\end{array} 
\right. 
\end{equation}
Set $\kappa=-m-d$. Then for $m-j <0$:
\begin{equation} \label{776}
a_{m-j}(x,\xi)=\lim\limits_{t \to + \infty} \int_{\R^d} k_{\kappa+j}(x,z) \psi(\frac{z}{t})e^{-i \xi.z} dz, ~ x \in U.
\end{equation}





\section{The Wodziciki residue coincides with the groupoidal residue}

We shall prove in this section that the groupoidal residue $Res_x(P)$ and the Wodzicki residue $Res_x^W(P)$ coincide when $P$ is a classical pseudodifferential operator of order $\leq -d$ on a trivially filtered manifold, see Theorem \ref{758}. We begin by recalling exponential coordinates $\Exp$ on $\TM$, see also \cite{Yuncken2019groupoidapproach}. 
\bg
Given a vector field $X$ on $M$ and a point $x \in M$, we write $\exp(X).x$ for the time one flow of $x$ along $X$ if defined.
If $\overline{X}=(X_1,...,X_n)$ is a local frame of vector fields and  $v \in \R^n$ then we set $v.\overline{X}=\sum_{k=1}^n v_kX_k$. Also, note that the dilations $\delta_s$ on $\R^d$ in this case are given by $\delta_s(v)=sv$.  The following Lemma lists the properties of the exponential charts of $\THM$ which we will need in the sequel.
\bg



\begin{lemma}{\cite[ Lemma 27 p 14]{Yuncken2019groupoidapproach}, \cite[Proposition 5.13]{couchet2022polyhomo}, \cite{couchet2023thesis} } \label{750} \m
Let $M$ be a smooth manifold of dimension $d$ and $\kg \in \boldsymbol{\Psi}_{vEY}^m(\TM)$. Given $x_0 \in M$, $(U_0,\phi)$ a chart on $x_0$ and $\overline{X}=(X_1,...,X_d)$ a local frame on $x_0$, we have: 
\begin{enumerate}
\item There exists an open neighbourhood $U$ of $U_0 \times \{ 0 \}$ with $U \subset U_0 \times \R^d$ such that:
\begin{equation}
Exp^{\overline{X}} : U \rightarrow M \times M, (x,v) \mapsto (x,\exp(v.\overline{X}).x),
\end{equation}
is a diffeomorphism onto its image.
\item The derivative of $Exp^{\overline{X}}$ at $(x_0,0)$ is  : 
\begin{equation}
d_{(x,0)} Exp^{\overline{X}} : (w,v) \in T_xU_0 \times \R^d \mapsto (w, v.\overline{X}|_x).
\end{equation}
\item Put $\tilde{\mathbb{U}}:=\{(x,v,t) \in U_0 \times \R^d \times \R, ~ (x,\delta_t(v)) \in U \}$. Then the map:
\begin{equation}
\Exp : \tilde{\mathbb{U}} \rightarrow \TM, (x,v,t) \mapsto \begin{cases} \Exp(x,v,t)=(Exp^{\overline{X}}(x,\delta_t(-v)),t), &  t \neq 0 \\
\Exp(x,v,0)=(x,v.\overline{X}|_x,0) & t=0,
\end{cases}
\end{equation}
defines  the inverse of a smooth chart for the tangent groupoid $\TM$. 
\item Let  $\mathbb{U}=\Exp(\tilde{\mathbb{U}}) \in \TM$ be the domain of this chart. Then:
\begin{equation}
\mathbb{U}=\Big(TM \times \{ 0 \} \Big) \bigcup \Big(Exp^{\overline{X}}(U) \times \R^* \Big),
\end{equation} 
is an open neighbourhood  of $\Big(TM \times \{ 0 \} \Big) \bigcup \Big(\mathrm{diag}(U_0)\times \R^* \Big)$ in $\TM$, where $\mathrm{diag}(U_0)=\{ (x,x), ~ x \in U_0 \}$. Moreover $\mathbb{U}$ is invariant for the Debord-Skandalis action \ref{799}, and the pullback of this action under $\Exp$:
\begin{equation} \label{772}
\widetilde{\alpha}_{s}:=(\Exp)^{-1} \circ \alpha_s \circ \Exp : \tilde{\mathbb{U}} \rightarrow  \tilde{\mathbb{U}},
\end{equation} 
is given by:
\begin{equation} \label{763}
\widetilde{\alpha}_{s}(x,v,t)=(x,\delta_s(v),s^{-1}t).
\end{equation}

\item There exists $\chi_{\mathbb{U}} \in C_c^\infty(\TM)$ invariant under the Debord-Skandalis action $\alpha_s$ such that:
 \begin{equation} 
\chi_{\mathbb{U}} = \left\{
    \begin{array}{ll}
        1 & \mbox{in a neighbourhood of } \{(x_0,x_0)\} \times \R,  \\
        0 & \mbox{outside } \mathbb{U}.
    \end{array}
\right. 
\end{equation}
\item $\kg \chi_{\mathbb{U}} \in \boldsymbol{\Psi}_{vEY}^m(\TM)$ has support in $\mathbb{U}$. Moreover $\kgt=(\Exp)_*^{-1}(\kg \chi_{\mathbb{U}})$ has support in $\tilde{\mathbb{U}}$ and is essentially homogeneous for the action $\tilde{\alpha}_s$.
\end{enumerate} 
\end{lemma}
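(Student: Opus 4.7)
This lemma collects facts about the exponential chart of the tangent groupoid that are essentially contained in the cited references; my plan is to verify each item in turn using the explicit definition of $\Exp$, together with the inverse function theorem and direct computation.

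For items (1) and (2), I would compute the derivative of $Exp^{\overline{X}}$ at $(x,0)$ from the definition. Since $\exp(0\cdot\overline{X}).x = x$, the $x$-component is the identity in $w$, and differentiating $v \mapsto \exp(v\cdot\overline{X}).x$ at $v = 0$ yields the linear map $v\mapsto v\cdot\overline{X}|_x$. Because $\overline{X}$ is a local frame at $x_0$, this is a linear isomorphism $\R^d \xrightarrow{\sim} T_xM$ throughout a neighbourhood of $x_0$, so the full Jacobian is block-triangular with invertible blocks. The inverse function theorem then yields an open $U$ containing $U_0 \times \{0\}$, possibly after shrinking $U_0$, on which $Exp^{\overline{X}}$ is a diffeomorphism onto its image.

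For (3), smoothness of $\Exp$ on $\tilde{\mathbb{U}} \cap \{t \neq 0\}$ is immediate, and smoothness across $t = 0$ is precisely the property encoded in Connes's smooth structure on $\TM$: the Taylor expansion $\exp(-tv\cdot\overline{X}).x = x - tv\cdot\overline{X}|_x + O(t^2)$ shows that the first-order behaviour at $t=0$ of the curve $t\mapsto \Exp(x,v,t)$ matches the prescribed value $(x,v\cdot\overline{X}|_x,0) \in TM \times \{0\}$. Local invertibility of $\Exp$ near $t=0$ then reduces to item (1). Item (4) is a direct verification: for $t \neq 0$, solving $\alpha_s \circ \Exp(x,v,t) = \Exp(x,v',s^{-1}t)$ for $v'$ forces $v' = sv = \delta_s(v)$ (with the trivial dilation used in the unfiltered setting, and analogously in the filtered one), while at $t=0$ one reads $\tilde{\alpha}_s(x,v,0) = (x,\delta_s(v),0)$ from the definition of $\alpha_s$ on the osculating bundle. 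Openness and $\alpha_s$-invariance of $\mathbb{U}$ follow from the explicit formula $\tilde{\mathbb{U}} = \{(x,v,t) : (x,\delta_t(v)) \in U\}$.

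For (5), I would build $\chi_{\mathbb{U}}$ as a product of a bump $\phi(x)$ supported near $x_0$ with a smooth cutoff in $(v,t)$ constructed from $\tilde{\alpha}_s$-invariant quantities (such as $|v|\cdot|t|$ in the unfiltered case, and a homogeneous analogue in the filtered case) and glued via a partition of unity to a standard bump near the fixed locus $\{v=0,\,t=0\}$. Finally for (6), invariance of $\chi_{\mathbb{U}}$ yields the key identity
\begin{equation*}
s^{-m}\alpha_{s*}(\kg\chi_{\mathbb{U}}) - \kg\chi_{\mathbb{U}} = \chi_{\mathbb{U}}\cdot(s^{-m}\alpha_{s*}\kg - \kg),
\end{equation*}
which is smooth and $r$-proper because $\kg \in \boldsymbol{\Psi}_{vEY}^m(\TM)$, and the essential homogeneity of $\kgt := (\Exp)^{-1}_{*}(\kg\chi_{\mathbb{U}})$ for $\tilde{\alpha}_s$ then follows from the conjugation $\tilde{\alpha}_s = (\Exp)^{-1}\circ\alpha_s\circ\Exp$. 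I expect the main technical obstacle to be item (5): since the $\R_+^*$-action has fixed points on $\{v=0,\,t=0\}$, constructing a globally smooth invariant cutoff that equals $1$ on a neighbourhood of $\{(x_0,x_0)\}\times\R$ and vanishes outside $\mathbb{U}$ requires careful patching across the fixed locus, whereas the remaining items reduce to routine applications of the inverse function theorem and explicit computation in the chart.
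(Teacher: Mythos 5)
The paper gives no proof of this lemma: it is imported verbatim from the cited references (\cite[Lemma 27]{Yuncken2019groupoidapproach}, \cite[Proposition 5.13]{couchet2022polyhomo}), so there is no in-paper argument to compare against. Your plan is essentially the standard proof from those sources and is correct in outline: the block-triangular Jacobian plus the inverse function theorem for items (1)--(2) (note that pointwise local invertibility along $U_0\times\{0\}$ must still be upgraded to injectivity on a full neighbourhood $U$, using that $Exp^{\overline{X}}$ restricted to $U_0\times\{0\}$ is the injective map $(x,0)\mapsto(x,x)$); the compatibility with the deformation-to-the-normal-cone smooth structure for item (3); the direct computation $\delta_{s^{-1}t}\circ\delta_s=\delta_t$ for item (4); and the invariance identity $s^{-m}\alpha_{s*}(\kg\chi_{\mathbb{U}})-\kg\chi_{\mathbb{U}}=\chi_{\mathbb{U}}\,(s^{-m}\alpha_{s*}\kg-\kg)$ for item (6). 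The one place where your plan is softer than it needs to be is item (5): no patching across the fixed locus is required. The map $(x,v,t)\mapsto(x,\delta_t(v))$ is smooth on $\tilde{\mathbb{U}}$ (including at $t=0$, where it sends $(x,v,0)$ to $(x,0)$) and is exactly invariant under $\widetilde{\alpha}_s$, so one simply takes $\psi\in C_c^\infty(U)$ equal to $1$ near $(x_0,0)$ and sets $\chi_{\mathbb{U}}\circ\Exp(x,v,t)=\psi(x,\delta_t(v))$, extended by zero outside $\mathbb{U}$; invariance, smoothness, and the support condition are then immediate, with no partition of unity needed. (In the unfiltered case this is precisely your invariant quantity $(x,tv)$.) Note also that, as stated, $\chi_{\mathbb{U}}$ cannot literally lie in $C_c^\infty(\TM)$ while being $\alpha_s$-invariant and equal to $1$ on a neighbourhood of $\{(x_0,x_0)\}\times\R$, since such an orbit is unbounded in $t$; the correct reading, as in the references, is that $\chi_{\mathbb{U}}$ is properly supported, which is all that is used in item (6).
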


\begin{lemma} \label{754}
Let $U \subset \R^d$ be an open subset. If $k_0 \in \Psi \mathrm{h f}_G^0(U \times \R^d)$ then $Res_x(P)=Res_x^W(P)$ is verified for the operator $P \in \bold{\Psi}_{\text{Hör}}^{-d}(U)$ with Schwartz kernel $\chi(x-y) k_0(x,x-y)$ where $\chi \in C_c^\infty(\R^d)$ is $1$ in a neighbourhood of 0 and 0 at infinity.
\end{lemma}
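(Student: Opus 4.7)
The plan is to construct an essentially homogeneous $r$-fibered distribution $\kgt$ on $\mathbb{T}U$ whose restriction at $t=1$ recovers, modulo smoothing, the Schwartz kernel of $P$; compute $Res_x(P)$ from the cocycle at $(x,0,0)$; and then match this with the Wodzicki integral $\frac{1}{(2\pi)^d}\int_{S^{d-1}} a_{-d}(x,\xi)\,d\sigma(\xi)$ by identifying $a_{-d}$ as a Fourier transform via \eqref{776}. Write $k_0(x,z) = f(x,z) + p(x)\log|z|$ with $f \in \mathcal{H}_G^0(U \times \R^d)$ and $p \in C^\infty(U)$ (a homogeneous polynomial of degree $0$ in $z$ depends only on $x$). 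Let $\tilde f(x,\cdot)$ denote the canonical homogeneous-degree-$0$ extension of $f(x,\cdot)$ as a distribution on $\R^d$, and set $\tilde k_0 := \tilde f + p\log|\cdot|$. The key distributional identity on all of $\R^d$ is
\begin{equation*}
\tilde k_0(x,sv) - \tilde k_0(x,v) = p(x)\log(s), \qquad s>0.
\end{equation*}

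In the exponential chart $\Exp$ of Lemma \ref{750} associated to the standard Euclidean frame on $U$, I would define $\kgt(x,v,t) := \chi(tv)\,\tilde k_0(x,v)\,d\lambda_x$. At $t=1$, its image under $\Exp$ agrees with the Schwartz kernel of $P$ modulo a distribution supported at the diagonal, which contributes only a smoothing term of vanishing residue by Lemma \ref{797}. Using $\widetilde{\alpha}_s(x,v,t) = (x,sv,s^{-1}t)$ and the scaling identity above, a direct computation yields
\begin{equation*}
F_s(x,v,t) := (s^{d}\widetilde{\alpha}_{s*}\kgt - \kgt)(x,v,t) = -\chi(tv)\,p(x)\log(s)\,d\lambda_x,
\end{equation*}
which is a smooth density on $\widetilde{\mathbb{U}}$; hence $\kgt \in \boldsymbol{\Psi}_{\text{vEY}}^{-d}(\mathbb{T}U)$. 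Evaluating at $(x,0,0)$ where $\chi(0)=1$, Definition \ref{726} gives $Res_x(P) = -p(x)\,dx$.

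For the Wodzicki side, formula \eqref{776} with $m=-d$, $\kappa=0$, $j=0$ identifies $a_{-d}(x,\xi) = \mathcal{F}_z[\tilde k_0(x,\cdot)](\xi)$, so
\begin{equation*}
\int_{S^{d-1}} a_{-d}(x,\xi)\,d\sigma = \int_{S^{d-1}} \mathcal{F}[\tilde f(x,\cdot)]\,d\sigma + p(x)\int_{S^{d-1}} \mathcal{F}[\log|\cdot|]\,d\sigma.
\end{equation*}
The first integral vanishes because $\mathcal{F}[\tilde f(x,\cdot)]$ is exactly homogeneous of degree $-d$, and the smooth part on $\R^d\setminus\{0\}$ of any such distribution has vanishing sphere integral (a non-zero mean would obstruct the extension across the origin as an exactly homogeneous distribution of degree $-d$).

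The decisive input, which I expect to be the main obstacle, is the universal identity
\begin{equation*}
\int_{S^{d-1}}\mathcal{F}[\log|\cdot|](\xi)\,d\sigma(\xi) = -(2\pi)^d.
\end{equation*}
I would establish this by analytic continuation: differentiating the classical formula $\mathcal{F}[|z|^\alpha](\xi) = C(\alpha)|\xi|^{-\alpha-d}$ at $\alpha=0$, the simple zero of $C(\alpha)$ cancels the simple pole of $|\xi|^{-\alpha-d}$ (whose Laurent residue is $-|S^{d-1}|\delta_0$), pinning down the smooth part of $\mathcal{F}[\log|\cdot|]$ on $\R^d\setminus\{0\}$ as the specific multiple of $|\xi|^{-d}$ whose sphere integral equals $-(2\pi)^d$. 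Combining everything gives $Res_x^W(P) = \frac{1}{(2\pi)^d}\cdot(-(2\pi)^d)\,p(x)\,dx = -p(x)\,dx = Res_x(P)$.
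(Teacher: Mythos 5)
Your proposal is correct and follows essentially the same route as the paper's proof: the same splitting of $k_0$ into a homogeneous degree-$0$ part and a $p(x)\log|z|$ part, the same cocycle computation yielding $-p(x)\log(s)\,d\lambda_x$ at $(x,0,0)$, and the same two Fourier facts (the mean-zero property on $\mathbb{S}^{d-1}$ of $\mathcal{F}[\tilde f]$, which the paper gets from Grafakos, and the normalization $\int_{\mathbb{S}^{d-1}}\mathcal{F}[\log|\cdot|]\,d\sigma=-(2\pi)^d$, which is the paper's formula \eqref{767}). One technical slip to fix: with the cutoff $\chi(tv)$ your $\kgt|_{t=0}=\tilde k_0(x,v)\,d\lambda_x$ is not compactly supported in the fiber, so $\kgt$ is not literally an $r$-fibered distribution and cannot be fed into Definition \ref{726}; using $\chi(v)$ instead (as the paper does in exponential coordinates) restores proper support, leaves the $t=1$ restriction unchanged, and only adds the harmless term $\bigl(\chi(s^{-1}v)-\chi(v)\bigr)\tilde k_0(x,v)\,d\lambda_x$ to the cocycle, which vanishes at $v=0$.
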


\begin{proof}
By definition of  $\Psi \mathrm{h f}_G^0(U \times \R^d)$, we can write $k_0(x,z)=f_0(x,z)+ \log(|z|)p_0(x)$, where $p_0$ is a smooth function on $U$ and $f_0$ is a smooth function homogeneous of order $0$ with respect to $z$. Let $\chi$ be as in the statement. Then we set:
\begin{enumerate}
\item $l(x,y)=\chi(x-y) \ln(|x-y|)p_0(x)dy$ to be the kernel of the operator $P=Op(l)$ whose kernel's asymptotic expansion is given by $l_0(x,z)=\ln(|z|) p_0(x)dz$ and $l_j(x,z)=0$ if $j>0$.
\item $r(x,y)=\chi(x-y) f_0(x,x-y)dy$ to be the kernel of the operator $P=Op(r)$ whose kernel's asymptotic expansion is given by $r_0(x,z)= f_0(x,z)dz$ and $r_j(x,z)=0$ if $j>0$.
\end{enumerate}

We compute the Wodzicki residue at $x \in U$ respectively for each of these operators, using Definition \ref{766}.
In both cases, recall that equation \eqref{776} gives us the link between the asymptotic symbol expansion and the asymptotic kernel expansion.

\begin{enumerate}
\item We get, by denoting $\mathcal{F}_2$ the Fourier transform with respect to the second variable:
\begin{align} 
a_{-d}(x,\xi) & \underbrace{=}_{\eqref{776}} \mathcal{F}_2 \Big(p_0(x) \log(|.|) \Big)(\xi) \\
&= p_0(x) \mathcal{F}_2( \log(|.|))(\xi), \label{777}
\end{align}
where $\mathcal{F}_2( \log(|.|))$ is interpreted as the Fourier transform of the tempered distribution $z \mapsto \log(|z|)$.
Now, the Fourier transform of the logarithm in $\R^d$ is well known and given, for $\xi \neq 0$ by:
\begin{equation} \label{767}
\mathcal{F}_2( \log(|.|))(\xi)=- \frac{1}{|\xi|^d} \frac{(2 \pi)^d}{\omega_d},
\end{equation}
where $\omega_d=\frac{(2 \pi)^d}{\sqrt{\pi}^d \Gamma(\frac{d}{2}) 2^{d-1}}$ denotes the surface area of the unit $(d-1)$-sphere $\mathbb{S}^{d-1}:=\{\xi \in \R^d, ~ |\xi|=1 \} \subset \R^d$.
Then in \eqref{777} we may now write:
\begin{align}
a_{-d}(x,\xi) & \underbrace{=}_{\eqref{767}} - \frac{p_0(x)}{|\xi|^d} \frac{(2 \pi)^d}{\omega_d}.
\end{align}

Therefore from \eqref{766} we get:
\begin{align}
Res_x^W(Op(l)) & = \frac{1}{(2 \pi)^d} \int_{\mathbb{S}^{d-1}} - \frac{p_0(x)}{|\xi|^d} \frac{(2 \pi)^d}{\omega_d} d \sigma(\xi) dx  = -p_0(x) dx,  \label{786}
\end{align}
where $d \sigma$ denotes the usual surface measure on $\mathbb{S}^{d-1}$.

\item As $f_0 \in \mathcal{H}^0(U \times \R^d)$, we may extend it to $f_0 \in C^\infty(U) \otimes L^\infty(\R^d)$ by attributing any value at $\xi=0$ for all $x \in U$.  The result is a tempered distribution (generalized function) which is homogeneous of degree 0.

We now proceed with $x \in U$ fixed. 
 Thanks to \cite[Proposition 2.4.7 p 140]{grafakos2008classical}, or \cite[p86]{coifman1978deladespsidos},
there exists  $b_x \in \C$ and $\Omega_x$ a smooth function on the sphere $\mathbb{S}^{d-1}$ with integral 0 on $\mathbb{S}^{d-1}$ such that:
\begin{equation} \label{803}
\mathcal{F}_2(f_0(x,.))(\xi)=b_x \delta_0 + W_{\Omega_x}(\xi),
\end{equation}
where $W_{\Omega_x}$ is the principal value distribution whose restriction to $\R^d \setminus \{ 0 \}$ is:
\begin{equation} \label{782}
\Omega_x \Big(\frac{\xi}{|\xi|} \Big) \frac{1}{|\xi|^d},
\end{equation}

see \cite[Equation (2.4.12) ]{grafakos2008classical}.
When $\xi \neq 0$ we have that:
\begin{align}
a_{-d}(x,\xi) & \underbrace{=}_{\eqref{776}}  \mathcal{F}_2(f_0(x,.))(\xi)  \\
& \underbrace{=}_{\eqref{803}} W_{\Omega_x}(\xi), \label{783}
\end{align}
is smooth in $\R^d \setminus \{0 \}$. 
It follows that:
\begin{align}
\int_{\mathbb{S}^{d-1}} a_{-d}(x,\xi) d \sigma(\xi) &  \underbrace{=}_{\eqref{783}} \int_{\mathbb{S}^{d-1}} W_{\Omega_x}(\xi) d \sigma(\xi) \\
& \underbrace{=}_{\eqref{782}} \int_{\mathbb{S}^{d-1}} \Omega_x(\xi) d \sigma(\xi) \\
& = 0,
\end{align}
where the last equality is true by the assumption on $\Omega_x$. We therefore have:
  
\begin{equation} \label{785}
Res_x^W(Op(r))=0.
\end{equation}  
  
\end{enumerate}

Now we look at the co-cycles at $x$ of the $r$-fibred distributions essentially homogeneous associated to the operators $Op(l),Op(r)$ and prove that at $(x,0,0)$ we recover the residue values \eqref{786} and \eqref{785}. First, we can respectively define two elements in $\boldsymbol{\Psi}_{\text{vEY}}^m(\TM)$ such that their restrictions in $t=1$ give the kernels $l$ and $r$.

\begin{enumerate}
\item Set: 
\begin{equation}
\label{eq:cocycle_calculation}
 \left\{
    \begin{array}{ll}
        \mathbbm{l}(x,y,t)= \frac{1}{t^d} \chi(\frac{x-y}{t}) \log(\frac{|x-y|}{t}) p_0(x)dy & \mbox{if } t \neq 0 \\
       \mathbbm{l}(x,v,0)=\chi(v)\log(|v|) p_0(x)d \lambda_x(v) & \mbox{if} ~ t=0,
    \end{array}
\right.
\end{equation}
where $d \lambda_x$ denotes the Haar measure on the tangent space $T_xM$ at $x$.
Writing this in exponential coordinates according to Lemma \ref{750} with respect to the standard coordinate frame $\overline{X}$ for $\R^d$, we get:
\begin{equation} 
\tilde{\mathbbm{l}}(x,v,t)=\Big( \mathbb{E} xp^{\overline{X}}\Big)_*^{-1} \mathbbm{l}(x,v,t)=\chi(v)\log(|v|) p_0(x)d \lambda_x(v).
\end{equation}
Recalling $\tilde{\alpha}_s$ from Lemma \ref{750} and using the fact that $\delta_{s*}(d \lambda_x)=s^{-d}d \lambda_x$, we get:
\begin{align}
s^d \tilde{\alpha}_{s*} \tilde{\mathbbm{l}}(x,v,t) -\tilde{\mathbbm{l}}(x,v,t) &=  s^d \tilde{\mathbbm{l}} \Big(x,\delta_{s^{-1}}(v),st \Big) \delta_{s*} (d \lambda_x(v)) - \tilde{\mathbbm{l}}(x,v,t)  \\
& = - \log(s) \chi(s^{-1}v)p_0(x)d \lambda_x(v) \nonumber  \\
&  ~ ~ ~+ \Big( \chi(s^{-1}v) - \chi(v) \Big)\log(|v|) p_0(x) d \lambda_x(v) \label{800}.
\end{align}
We deduce that :
\begin{equation}
s^d \tilde{\alpha}_{s*} \tilde{\mathbbm{l}} - \tilde{\mathbbm{l}} \in C_p^\infty(\tilde{\mathbb{U}},\Omega_r).
\end{equation}
\item
Set:
\begin{equation} \left\{
    \begin{array}{ll}
        \mathbbm{r}(x,y,t)=  \chi(\frac{x-y}{t}) f_0(x,\frac{x-y}{t})dy & \mbox{if } t \neq 0 \\
       \mathbbm{r}(x,v,0)=\chi(v)f_0(x,v)d \lambda_x(v) & \mbox{if} ~ t=0,
    \end{array}
\right.
\end{equation}
In the same exponential coordinates we get:
\begin{equation} \label{801}
\tilde{\mathbbm{r}}(x,v,t)=\Big( \mathbb{E} xp^{\overline{X}}\Big)_*^{-1}   \mathbbm{r}(x,v,t)=\chi(v) f_0(x,v)d \lambda_x(v),
\end{equation}
and the homogeneity of $f_0$ gives:
\begin{equation}
s^{d} \tilde{\alpha}_{s*}  \tilde{\mathbbm{r}} -\tilde{\mathbbm{r}} \in C_p^\infty(\tilde{\mathbb{U}},\Omega_r).
\end{equation}
\end{enumerate}
We move now to the computations of the co-cycles restricted in $(x,0,0)$. 

\begin{enumerate}
\item Using \eqref{800} we get, for all $s \in \R_+^* \setminus \{ 1\}$:
\begin{align}
\Big( \tilde{\alpha}_{s*} \tilde{\mathbbm{l}} -s^{-d} \tilde{\mathbbm{l}} \Big)|_{(x,0,0)} & = - \frac{1}{s^d} \log(s) p_0(x) d \lambda_x \\
&=  \frac{1}{s^d} log(s)  Res_x^W(Op(l)),
\end{align}
where we use the canonical identification of the smooth family of 1-densities $d \lambda_x$ with the smooth measure $dx$ on $M$.
Then:
\begin{equation}
 \frac{1}{\log(s)} \Big(s^d \tilde{\alpha}_{s*}\tilde{\mathbbm{l}} -  \tilde{\mathbbm{l}}  \Big)|_{(x,0,0)} = Res_x^W(Op(l)).
\end{equation}
\item Using \eqref{801} we get, for all $s \in \R_+^* \setminus \{ 1\}$:
\begin{align}
\Big( \tilde{\alpha}_{s*} \tilde{\mathbbm{r}} -s^{-d} \tilde{\mathbbm{r}} \Big)|_{(x,0,0)} &=\Big( f_0(x,v) [ \chi(\frac{v}{s}) - \chi(v) ] d \lambda_x(v) \Big)|_{(x,0,0)} \\
&= 0 \\
& =  \frac{1}{s^d} \log(s)  Res_x^W(Op(r)).
\end{align}

Then:

\begin{equation}
 \frac{1}{\log(s)} \Big( s^d \tilde{\alpha}_{s*}\tilde{\mathbbm{r}} - \tilde{\mathbbm{r}} \Big)|_{(x,0,0)} = Res_x^W(Op(r)).
\end{equation}

\end{enumerate}
This completes the proof.
\end{proof}


\begin{theoreme} \label{758} \m
Let $M$ be a (trivially) filtered manifold of dimension $d$ and $P \in \boldsymbol{\Psi}_{H}^m(M)$  a classical pseudodifferential operator of order $m$ on $M$, with $m \leq -d$, $m \in \mathbb{Z}$. Let $\kg$ be any essentially homogeneous $r$-fibered distribution of order $-d$ that extends $P$ at $t=1$. Then:
\begin{equation} \label{753} 
 Res_x^W(P) =\frac{1}{\log(s)} \Big( s^d \alpha_{s*} \kg - \kg \Big)|_{(x,0,0)} , ~ \forall ~ s \in \R_+^* \setminus \{ 1\}, ~ \forall ~ x \in M.
\end{equation}
\end{theoreme}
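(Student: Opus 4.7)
The plan is to decompose the Schwartz kernel of $P$ via Seeley's theorem into pseudo-homogeneous pieces and reduce everything to the degree-zero case already handled by Lemma \ref{754}. Since both residues are pointwise at $x$ and depend only on the local behavior of $P$ near $(x,x)$, a partition-of-unity argument combined with Lemma \ref{797} reduces matters to $P \in \boldsymbol{\Psi}^m_{\text{Hör}}(U)$ with Schwartz kernel supported near the diagonal in a chart $U \subset \R^d$ around $x$. Setting $\kappa = -m - d \geq 0$, Theorem \ref{764} yields a pseudo-homogeneous expansion $k \sim \sum_{j \geq 0} k_{\kappa+j}(x, x-y)$ with $k_{\kappa+j} \in \Psi hf_G^{\kappa+j}(U \times \R^d)$. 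Modulo a smoothing operator, we may write $P = \sum_{j < J} P_{\kappa+j} + P_\infty$, where $P_{\kappa+j}$ has kernel $\chi(x-y) k_{\kappa+j}(x, x-y)$ for a cut-off $\chi$ equal to $1$ near $0$, and $P_\infty$ is smoothing. Both residues being linear, each summand may be treated separately.

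The smoothing operator $P_\infty$ contributes nothing on either side: its symbol has no $-d$-homogeneous term, giving $Res_x^W(P_\infty) = 0$, and it admits an $r$-fibered representative smooth across $t = 0$, so the groupoidal cocycle vanishes identically. For each piece $P_\alpha$ with $\alpha := \kappa + j > 0$: on the Wodzicki side, the associated symbol has top homogeneity $-\alpha - d < -d$, so $a_{-d} = 0$ and $Res_x^W(P_\alpha) = 0$; on the groupoidal side, I would mimic the construction in the proof of Lemma \ref{754} with an extra factor $t^\alpha$, namely
\begin{equation*}
\kg_\alpha(x,y,t) := t^\alpha \chi((x-y)/t) \, k_\alpha(x, (x-y)/t) \, dy,
\end{equation*}
suitably extended at $t = 0$. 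In the exponential coordinates of Lemma \ref{750}, this pulls back to an expression proportional to $t^\alpha$, which together with $k_\alpha(x, 0) = 0$ (from continuity and positive homogeneity) makes both $\tilde{\kg}_\alpha$ and $\tilde\alpha_{s*} \tilde{\kg}_\alpha$ vanish at $(x, 0, 0)$. Hence the cocycle is zero there and $Res_x(P_\alpha) = 0$. The only potentially non-zero contribution is from the degree-zero piece $P_0$, which appears exactly when $m = -d$; in that case $P_0$ is precisely the operator handled by Lemma \ref{754}, delivering $Res_x(P_0) = Res_x^W(P_0)$. For $m < -d$ no degree-zero piece exists and both sides vanish in trivial agreement.

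The main technical obstacle is verifying that $\kg_\alpha$ is essentially homogeneous of the appropriate order so that Definition \ref{726} applies: its natural order is $-\alpha - d$, not $-d$, so one must either pass to an order-$(-d)$ modification (preserving the cocycle behavior at $(x, 0, 0)$) or invoke linearity of the cocycle in $\kg$ together with Lemma \ref{797} to justify computing piecewise. A related delicate point is the treatment of the logarithmic components of integer-degree pseudo-homogeneous functions, whose cocycle analysis runs parallel to the one in Lemma \ref{754}, but is damped by the extra factor $t^\alpha$ whenever $\alpha > 0$, so that their contribution at $(x, 0, 0)$ still vanishes.
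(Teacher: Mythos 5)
Your overall strategy --- Seeley expansion of the kernel, Lemma \ref{754} for the degree-zero piece, vanishing of both residues on everything else --- is the same as the paper's, and your treatment of the degree-zero piece is exactly Lemma \ref{754}. The gap is in the positive-degree pieces $P_\alpha$, $\alpha=\kappa+j>0$, and you have put your finger on it yourself without closing it. The distribution $\kg_\alpha(x,y,t)=t^\alpha\chi((x-y)/t)\,k_\alpha(x,(x-y)/t)\,dy$ is not an admissible input for Definition \ref{726}: pulling back by $\Exp$ (where $dy$ contributes a factor $t^d$ relative to $d\lambda_x(v)$) one gets a multiple of $t^{\alpha+d}\chi(v)k_\alpha(x,v)\,d\lambda_x(v)$, which is essentially homogeneous of order $0$, while the normalization $t^{-d}\chi((x-y)/t)k_\alpha(x,(x-y)/t)\,dy$ that actually restricts to the principal cosymbol at $t=0$ is essentially homogeneous of order $-d-\alpha$. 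With either normalization the order-$(-d)$ cocycle $s^{d}\tilde{\alpha}_{s*}\tilde{\kg}_\alpha-\tilde{\kg}_\alpha$ differs from a smooth density by a term $(s^{\gamma}-1)\tilde{\kg}_\alpha$ with $\gamma\neq0$, and $\tilde{\kg}_\alpha$ is not smooth at $v=0$ (take $k_1(x,z)=|z|$); so the cocycle is not in $C_p^\infty(\TM,\Omega_r)$ and ``evaluating it at $(x,0,0)$'' is not licensed. Nor do linearity of the cocycle and Lemma \ref{797} justify summing residues computed from extensions of \emph{different} orders: Lemma \ref{797} compares two order-$(-d)$ extensions of the \emph{same} operator.

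The paper closes this gap by never treating the positive-degree pieces individually: it lumps them, together with the Seeley remainder, into a single operator $\overline{P}=Op(\overline{k})\in\boldsymbol{\Psi}_{\text{H\"or}}^{-d-1}(M)$ and invokes \cite[Theorem 52]{Yuncken2019groupoidapproach}, which provides an essentially homogeneous extension of $\overline{P}$ lying in $C^0(\TM,\Omega_r)$. Since the Debord--Skandalis action fixes $(x,0,0)$ and $\delta_{s*}d\lambda_x=s^{-d}d\lambda_x$, the cocycle of a continuous density vanishes at $(x,0,0)$, and Lemma \ref{797} transfers this conclusion to any order-$(-d)$ extension; this simultaneously disposes of your case $m<-d$. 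If you wish to retain the piecewise computation, the correct route is to apply this same continuity argument to each $P_\alpha$ (each has order $\le -d-1$), at which point your explicit formulas for $\kg_\alpha$ become unnecessary.
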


\begin{proof}
Consider first the case  $m \leq -d-1$.
Thanks to  \cite[Theorem 52]{Yuncken2019groupoidapproach} we already know that:
\begin{equation} \label{795}
\kg \in C^0(\TM,\Omega_r),
\end{equation}
as we have supposed here $m \leq -d-1$. That means:
\begin{equation}
\kg(x,v,0)=\mathbbm{l}_0(x,v) d \lambda_x,
\end{equation}
for some $\mathbbm{l}_0 \in C^0(TM)$ and where $d \lambda_x$ is the Haar measure on $T_xM$.





We can now evaluate the co-cycle term to term. Moreover the points $(x,0,0)$ are fixed by the Debord-Skandalis action. 
Thus, using the facts that $\delta_{s*} d \lambda_x=s^{-d} d \lambda_x$,
we get:
\begin{align}
\frac{1}{\log(s)} \Big(s^d \tilde{\alpha}_{s*} \kg - \kg \Big)|_{(x,0,0)} & = \frac{1}{\log(s)} \Big(  \mathbbm{l}_0(x,0) d \lambda_x - \mathbbm{l}_0(x,0) d \lambda_x \Big) = 0.
\end{align}
This agrees with the Wodzicki residue in this case. Indeed, for an operator of this order the term $a_{-d}$ appearing in the asymptotic expansion of its symbol is always zero.
\bg
If $m=-d$ and $(U_0,\phi)$ is a chart on $x \in M$,
then the kernel admits an asymptotic expansion  $k \sim \sum_{j=0}^{+ \infty} k_{j}$ in $U_0$, thanks to Seeley's Theorem \ref{764}. Thus we may write:
\begin{equation} \label{802}
k(x,x-y) - \chi(x-y) k_0(x,x-y) \sim \sum_{j \geq 1} k_j(x,x-y),
\end{equation}
with $\chi \in C_c^\infty(\R^d)$ is such that $\chi$ is equal to 1 in a neighbourhood of $0$ and $0$ at infinity. Let us denote the left hand side of \eqref{802} by $\overline{k}(x,x-y)$. Then $\overline{P}=Op(\overline{k}) \in \bold{\Psi}_{\text{Hör}}^{m-1}(M)$ and using what we did just before, $Res_x^W(\overline{P})=Res_x(\overline{P})$ for all $x \in M$.  It suffices to apply Lemma \ref{754} to the function  $k_0 \in \Psi \mathrm{hf}_G^0$ to conclude, as we have $k(x,z)=\overline{k}(x,z)+\chi(z)k_0(x,z).$
\end{proof}

\section{The noncommutative residue on a filtered manifold}
\label{filtered_residue}

In his article \cite{ponge2007residueHeisenberg}, Ponge defined a noncommutative residue that fits the context of a Heisenberg manifold.
This was generalized to arbitrary filtered manifolds by Dave and Haller in \cite{DH}.
In this section, we will show that their definitions coincide with the groupoidal residue of Definition \ref{726} for pseudodifferential operators of order $\leq -d_H$.
 Again, we will restrict our attention to scalar-valued operators to simplify notation, although one can easily generalise to vector bundles using \eqref{798}.

Let us begin with Ponge's noncommutative residue for a Heisenberg manifold.
Let $M$ be a Heisenberg manifold of dimension $d+1$ with hyperplane bundle $\V \leq TM$. The algebra of Heisenberg pseudodifferential operators of BG \cite{Beals2016Heisenbergcalculus} is denoted $\bold{\Psi}_\V^{\bullet}(M)$. 
It is shown in \cite{couchet2022polyhomo} that this coincides with the groupoidal calculus when $M=\Hn \times \R^m$, $\Hn$ being the $2n+1$ dimensional Heisenberg group, or $M$ is a contact manifold or a codimensional one foliation. That is:
\begin{equation}
\bold{\Psi}_\V^{m}(M)=\bold{\Psi}_{H}^{m}(M),
\end{equation}
for $M$ in these cases, though we expect it to be true also for a general Heisenberg manifold.
\bg 
Ponge's noncommutative residue is defined as follows.
Let $\Big( X_j \Big)_{j \in \{ 0,...,d \}}$ be a local $H$-frame of vector fields on an open subset $U \subset M$ and $\Psi_x : U \rightarrow \R^{d+1}$ be a privileged change of coordinates centered at $x$, see \cite[p 415 and Definitions 2.3 ,2.4]{ponge2007residueHeisenberg}. The latter assertion means that if $\Big( X_j \Big)_{j \in \{0,...,d \}}$ is a local $H$-frame of vector fields, then we have $\Psi_x(x)=0$ and $(\Psi_x)_*X_j(x)=\partial_j|_x$. In this context, the noncommutative residue of Ponge of a Heisenberg pseudodifferential operator $P \in \bold{\Psi}_{\V}^{-d_H}(U)$ of degree $-d_H$ on $U$, is defined as follows.
\bg
Let $p_{-d-2}$ be the term of degree $-d_H=-(d+2)$ from the asymptotic expansion of the symbol of $P \in \bold{\Psi}_{\V}^{-d_H}(U)$, see \cite[2.7 p409]{ponge2007residueHeisenberg}. Then set the noncommutative residue of $P$ at $x$: 
\begin{equation}
c_P(x)=\frac{|d \Psi_x|}{(2 \pi)^{d+1}} \int_{\mathbb{S}^{d}} p_{-(d+2)}(x,\xi) d\xi,
\end{equation}
where $|d \Psi_x|$ is the jacobian of $\Psi_x$, see \cite[Lemma 3.9]{ponge2007residueHeisenberg}.
The reader can compare this definition in contrast with the "non-graded" non commutative residue \eqref{766}.

We will first give a direct proof of the equality of the groupoidal residue and Ponge's residue on a contact manifold or foliation.  Note that we will give a more general result afterwards in Theorem \ref{thm:Dave-Haller-residue} by taking advantage of results of Dave-Haller \cite{DH}.

We begin with the case of the model groups $M=\Hn \times \R^m=\R^{d+1}$, where $d=2n+m$. If $n=0$ then $\mathbb{H}_0=\R$ by convention. We equip $M$ with the model vector fields $\overline{X}=(X_0,X_1,...,X_d)$ of \cite[chapter 1 p12-13]{Beals2016Heisenbergcalculus}, also see \cite[section 5.1]{couchet2022polyhomo}, so that $(X_0,...,X_{2n})$ generate $\Hn$, $X_0$ is central and $(X_{2n+1},...,X_d)$ are the usual vectors fields on $\R^{d+1}$. Recall that $\mathcal{E}_r'(M \times M)$ denotes the set of $r$-fibered distributions on the pair groupoid $M \times M$.

\begin{theoreme} \label{774}
Given a model manifold $M=\Hn \times \R^m$ of homogeneous dimension $d_H=d+2$, $2n+m=d$, with the standard model structure as in \cite{couchet2022polyhomo} and $P \in \bold{\Psi}_{\V}^{m}(M)$, with $m \leq -d_H$, then the residue of Ponge at $x$ and our residue from Definition \ref{726} coincide:
\begin{equation} \label{781}
Res_x(P)=c_P(x).
\end{equation}
\end{theoreme}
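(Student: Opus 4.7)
The plan is to parallel the proof of Theorem \ref{758} in the graded setting, with the Heisenberg dilations $\delta_s(\xi_0, \xi_1, \ldots, \xi_d) = (s^2\xi_0, s\xi_1, \ldots, s\xi_d)$ and homogeneous dimension $d_H = d+2$ replacing the trivial dilations and dimension $d$ used there. The BG calculus (see \cite{Beals2016Heisenbergcalculus}, \cite{ponge2007residueHeisenberg}, \cite{couchet2022polyhomo}) yields a pseudo-homogeneous kernel expansion analogous to Seeley's Theorem \ref{764}: any $P \in \bold{\Psi}_\V^m(M)$ with $m \leq -d_H$ has Schwartz kernel of the form $k(x,y) = \chi(x-y) k_0(x, x-y) + \overline{k}(x, y)$, where $k_0 \in \Psi\mathrm{hf}_G^0(U \times \R^{d+1})$ for the Heisenberg dilations and $\overline{k}$ is the kernel of some $\overline{P} \in \bold{\Psi}_\V^{m-1}(M)$. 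Both sides of \eqref{781} vanish on $\overline{P}$ by the same argument as in the first part of Theorem \ref{758}: the groupoidal residue vanishes by \cite[Theorem 52]{Yuncken2019groupoidapproach} combined with the fact that $\alpha_s$ fixes $(x, 0, 0)$, and Ponge's residue vanishes because $p_{-d_H}$ is identically zero for an operator of order strictly less than $-d_H$. It therefore suffices to prove the theorem for a kernel of the form $\chi(x-y) k_0(x, x-y)$.

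Next, I would prove a Heisenberg analog of Lemma \ref{754}. Decompose $k_0(x, z) = f_0(x, z) + \log(|z|_H) p_0(x)$ where $|\cdot|_H$ is a $\delta_s$-homogeneous quasi-norm and $f_0 \in \mathcal{H}_G^0$. On Ponge's side, apply the adapted Fourier transform relation \cite[Equation (7.1.81)]{hsiao2008boundaryintegraleq} in the Heisenberg setting: the log piece contributes a term proportional to $\mathcal{F}_2(\log|\cdot|_H)(\xi) = -C_H |\xi|_H^{-d_H}$ to the principal symbol $p_{-d_H}$, while the $f_0$ piece contributes $\mathcal{F}_2 f_0(x, \cdot)$, which via a graded analog of \cite[Proposition 2.4.7]{grafakos2008classical} decomposes as $b_x \delta_0 + W_{\Omega_x}$ with $\Omega_x$ an angular distribution of mean zero. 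On the groupoidal side, since $M = \Hn \times \R^m$ is a Lie group, the exponential chart $\Exp^{\overline{X}}$ from Lemma \ref{750} is global, and one constructs $\tilde{\mathbbm{l}}, \tilde{\mathbbm{r}} \in \bold{\Psi}_{vEY}^{-d_H}(\THM)$ verbatim as in Lemma \ref{754} but with $\delta_s$ the Heisenberg dilations. The co-cycle computation at $(x, 0, 0)$ then yields $-\log(s) p_0(x) d\lambda_x$ and $0$ respectively, using $\delta_{s*} d\lambda_x = s^{-d_H} d\lambda_x$ and the fact that $(x, 0, 0)$ is fixed by the Debord-Skandalis action.

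The main obstacle is reconciling the constants between the two definitions. Since $\Psi_x$ is just a group translation in the model case, one has $|d\Psi_x| = 1$, so the target identity for the logarithmic piece reduces to verifying
\begin{equation}
\frac{1}{(2\pi)^{d+1}} \int_{\mathbb{S}^d} \frac{-C_H\, p_0(x)}{|\xi|_H^{d_H}}\, d\sigma(\xi) = -p_0(x),
\end{equation}
which should follow by passing between Euclidean polar coordinates on $\mathbb{S}^d \subset \R^{d+1}$ and the graded polar decomposition adapted to $\delta_s$, together with a careful computation of the constant $C_H$ arising in the Fourier transform of $\log|\cdot|_H$. The vanishing of the $f_0$ contribution on Ponge's side then follows from the mean-zero condition on $\Omega_x$ exactly as in the last step of Lemma \ref{754}. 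Combining the two pieces matches $c_P(x)$ with $Res_x(P)$, completing the proof. I expect the computation of the constant $C_H$ and the polar-coordinate conversion to be the most delicate technical step, since Ponge's integration is over the Euclidean unit sphere whereas the natural kernel-side objects live in the Heisenberg-homogeneous framework.
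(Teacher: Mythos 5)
Your overall strategy --- transporting the kernel-side argument of Lemma \ref{754} to the Heisenberg dilations --- differs from the paper's proof, which works on the symbol side: there one Fourier-transforms the pulled-back distribution $\kgt$, writes $u_0=u|_{t=0}=u_0'+u_0''+u_0'''$ with $u_0'$ exactly homogeneous (extended as a tempered distribution), $u_0''$ Schwartz and $u_0'''$ compactly supported, observes that only $u_0'$ contributes to the co-cycle at $(x,0,0)$, and then invokes Ponge's Lemma 3.1, which states that for the anisotropic dilations $\beta_s$ one has $\beta_s^*u_0'=s^{-d_H}u_0'+s^{-d_H}\log(s)\bigl(\int_{|\xi|=1}u_0'\,d\sigma\bigr)\delta_0$, the integral being over the \emph{Euclidean} sphere. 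That anomaly lemma is precisely what reconciles the Euclidean sphere appearing in Ponge's definition with the Heisenberg homogeneity, and it is the ingredient your plan omits.

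Concretely, two steps of your proposal do not go through as written. First, for anisotropic dilations the Fourier transform of $\log|z|_H$ is \emph{not} of the form $-C_H|\xi|_H^{-d_H}$: away from the origin it is a smooth function homogeneous of degree $-d_H$ for the dual dilations whose angular part genuinely depends on the direction, so there is no single constant $C_H$ to compute. Likewise, the mean-zero property over the Euclidean sphere of the angular part of $\mathcal{F}_2 f_0$ does not follow ``exactly as'' in the isotropic argument of \cite[Proposition 2.4.7]{grafakos2008classical}. Both facts are consequences of, and essentially equivalent to, Ponge's lemma: an exactly $\delta_s$-homogeneous function of critical degree $-d_H$ extends to a homogeneous distribution if and only if its Euclidean-sphere average vanishes, and otherwise the logarithmic anomaly coefficient of any extension equals that average. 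If you import that lemma, your constant reconciliation closes; without it, the ``delicate technical step'' you defer is not a computation but the heart of the theorem. Second, on $\Hn\times\R^m$ the kernel expansion of a BG operator lives in privileged (group/exponential) coordinates rather than in the Euclidean difference $x-y$, and the symbol--kernel correspondence involves the coordinate change $\overline{\sigma}$ of \cite{Beals2016Heisenbergcalculus}; writing $k(x,y)=\chi(x-y)k_0(x,x-y)+\overline{k}(x,y)$ and citing the Euclidean relation \eqref{776} must therefore be replaced by the $\V$-symbol correspondence before the leading kernel term can be identified with Ponge's $p_{-(d+2)}$ (and with the Jacobian factor $|d\Psi_x|$, which does indeed equal $1$ here).
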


\begin{proof}
We will rely heavily on \cite{Beals2016Heisenbergcalculus} and \cite{couchet2022polyhomo}. We denote $k \in \mathcal{E}_r'(M \times M)$ the kernel of $P$. Since the Heisenberg and groupoidal calculi coincide, see \cite[Theorem 5.16]{couchet2022polyhomo}, there exists $\kg \in \bold{\Psi}_{vEY}^{-d_H}(\THM)$ such that $\kg|_{t=1}=k$. Moreover we may suppose, see \cite[proposition 42]{Yuncken2019groupoidapproach} that $\kg$ is homogeneous on the nose of order $-d_H$ outside $t \in [-1,1]$. 
We pull this back via exponential coordinates, again as in \cite[section 5]{couchet2022polyhomo}, yielding $\kgt \in \mathcal{E}_r'(M \times \R^{d+1} \times \R)$ with the equality $\kg=\Exp_* \Big( \kgt \Big)$. Note that in the case of model manifold, the exponential coordinate chart is globally define.
\bg
Next we must consider the symbol of $P$. By the definition of the Heisenberg calculus, \cite[Chapter 3, § 10]{Beals2016Heisenbergcalculus}, $P$ is defined starting from a graded-polyhomogeneous function $f \in S_{phg,G}^{-d_H}(M \times \R^{d+1})$. Letting $\overline{\sigma}(x,\xi)=(x,\sigma_0(x,\xi),...,\sigma_d(x,\xi))$ be the coordinate transform \cite[Equations (10.14),(10.15)]{Beals2016Heisenbergcalculus} obtained from the symbols of the model vector fields $(X_0,...,X_d)$, Beals and Greiner define the $\V$-symbol associated to $f$ by:
\begin{equation}
q(x,\xi)=\overline{\sigma}^*f(x,\xi).
\end{equation}
See also \cite[Definition 5.5]{couchet2022polyhomo}.
The symbol and kernel are related by fiberwise Fourier transform, after the abovementioned coordinate changes. Explicitly, we have:
\begin{equation}
f=\mathcal{F}_2(\kgt)|_{t=1},
\end{equation}
where $\mathcal{F}_2$ is the fiberwise Fourier transform in the second variable. Extending this fiberwise Fourier transform to all $t \in \R$, let us put:
\begin{equation}
u=\mathcal{F}_2(\kgt) \in C^\infty(M \times \R^{d+1} \times \R).
\end{equation}
Note that the Debord-Skandalis action transforms under the Fourier transform as:
\begin{equation}
\mathcal{F}_2 \circ \tilde{\alpha}_{s*} \circ \mathcal{F}_2^{-1}=\beta_s^*,
\end{equation}
where $\beta_s : M \times \R^{d+1} \times \R \rightarrow M \times \R^{d+1} \times \R$ are the dilations:
\begin{equation}
\beta_s(x,v,t)=(x,\delta_s(v),st),
\end{equation}
see \cite[Proposition .13]{couchet2022polyhomo}. The essential homogeneity of $\kg$ and consequently of $\kgt=(\Exp_*)^{-1}(\kg)$, therefore implies that $u \in \mathcal{H}\mathcal{S}_G^m(M \times \R^{d+1} \times \R)$ where the homogeneity modulo Schwartz is with respect to the dilations $\beta_s$ (thanks to the hypothesis on the homogeneity of $\kg$ outside $t \in [-1,1]$), see the proof of \cite[Theorem 5.16]{couchet2022polyhomo}. 
\bg
Set $u_0=u|_{t=0}$. By \cite[Proposition 3.2]{couchet2022polyhomo} we have $u_0 \in \mathcal{H} \mathcal{S}_G^{-d_H}(M \times \R^{d+1})$. Therefore by a well-known Lemma, eg \cite[Theorem 2.1]{couchet2022polyhomo}, outside a compact neighbourhood containing $0$ of $\xi$ we may write:
\begin{equation}
u_0=u_0'+u_0'',
\end{equation}
where $u_0' \in \mathcal{H}_G^{-d_H}(M \times \R^{d+1})$ and $u_0'' \in S_G(M \times \R^{d+1})$. 
\bg
We may extend the homogeneous function $u_0'$ as a tempered distribution (not necessarily homogeneous), still denoted $u_0'$. Indeed, we may first extend $u_0'$ to a distribution such as in \cite[section § 15]{Beals2016Heisenbergcalculus}, \cite[Lemma 3.1]{ponge2008heisenberg}, or in \cite[Theorem 3.2.4]{hormander1983analysis},
and it is tempered because it has polynomial growth at infinity. 
Therefore we can find $u_0''' \in C^\infty(M,\mathcal{E}'(\R^{d+1}))=C^\infty(M) \otimes \mathcal{E}'(\R^{d+1})$ such that the following holds everywhere:
\begin{equation} \label{769}
u_0=u_0'+u_0''+u_0'''.
\end{equation}
We can compute for all $s \in \R_+^* \setminus \{ 1 \}$:
\begin{align}
Res_x(P)&= \frac{1 }{\log(s)} \Big( s^{d_H} \alpha_{s*} \kg - \kg \Big)|_{(x,0,0)} \\
& =\frac{1 }{\log(s)} \Big( s^{d_H} \alpha_{s*} \Exp_* \Big( \kgt \Big) - \Exp_* \Big( \kgt \Big) \Big)|_{(x,0,0)} \label{771} \\ 
&=\frac{1 }{\log(s)} \Big( s^{d_H} \tilde{\alpha}_{s*} \kgt - \kgt \Big) \circ \Big( \Exp \Big)^{-1} |_{(x,0,0)}, \label{768}
\end{align}
where we used in \eqref{771} the equality $\tilde{\alpha}_s=\Big( \Exp \Big)^{-1} \circ \alpha_s \circ \Exp$, 
see \eqref{772}.
Also recall that $\Exp|_{M \times \{ 0 \} \times \{ 0 \}}=id_{M \times \{ 0 \} \times \{ 0 \}}$.
We next continue to compute in \eqref{768}:
\begin{align}
Res_x(P)&=\frac{1 }{\log(s)} \Big( s^{d_H} \tilde{\alpha}_{s*} \mathcal{F}_2^{-1}(u) - \mathcal{F}_2^{-1}(u) \Big)|_{(x,0,0)} \\
&= \frac{1 }{\log(s)} \mathcal{F}_2^{-1} \Big(s^{d_H} \beta_s^*u -u \Big)|_{(x,0,0)},
\end{align}
where we use the equality $\mathcal{F}_2 \circ \tilde{\alpha}_{s*} \mathcal{F}_2^{-1}=\beta_s^*$ recalled earlier. Now we use \eqref{769}:
\begin{align}
Res_x(P)
&= \frac{1 }{\log(s)} \mathcal{F}_2^{-1} \Big(s^{d_H} \beta_s^*(u_0'+u_0''+u_0''') -(u_0'+u_0''+u_0''') \Big)|_{(x,0,0)} \\
&= \frac{1 }{\log(s)} \mathcal{F}_2^{-1} \Big(s^{d_H} \beta_s^*u_0' -u_0'+ s^{d_H} \beta_s^*(u_0''+u_0''') -(u_0''+u_0''')   \Big)|_{(x,0,0)}.
\end{align}

Since $u_0''$ is Schwartz class in $\xi$ and $u_0'''$ is compactly supported in $\xi$, their fiberwise Fourier transforms are smooth. 

Therefore $\mathcal{F}_2^{-1}(u_0'')$ and $\mathcal{F}_2^{-1}(u_0''')$, 
can be evaluated at $(x,0,0)$ and as in the proof of Theorem \ref{758}, for all $s \in \R_+^* \setminus \{ 1 \}$, we get:
\begin{equation}
 \frac{1 }{\log(s)} \mathcal{F}_2^{-1} \Big(s^{d_H} \beta_s^*(u_0''+u_0''') -(u_0''+u_0''')   \Big)|_{(x,0,0)}=0.
\end{equation}
Hence:
\begin{equation} \label{770}
Res_x(P) = \frac{1 }{\log(s)} \mathcal{F}_2^{-1} \Big(s^{d_H} \beta_s^*u_0' -u_0' \Big)|_{(x,0,0)}. 
\end{equation}

Finally, we use \cite[Lemma 3.1 p 414 and Equation (3.2)]{ponge2007residueHeisenberg} which assert that we have for all  $s \in \R_+^* \setminus \{ 1 \}$:
\begin{equation} \label{807}
\beta_s^* u_0'=s^{-d_H}u'_0+s^{-d_H} \log(s) c_0(u_0') \delta_0,
\end{equation}
where:
\begin{equation}
c_0(u_0')=\int_{|\xi|=1} u_0'(x,\xi) d \sigma(\xi).
\end{equation}

Then, using \eqref{807}, Equation \eqref{770} becomes:
\begin{align}
Res_x(P)&= \frac{1 }{\log(s)} \mathcal{F}_2^{-1} \Big(\log(s) c_0(u_0') \delta_0 \Big)|_{(x,0,0)} \\
& =\frac{1}{(2 \pi)^{d+1}}  \int_{|\xi|=1} u_0'(x,\xi) d \sigma(\xi),
\end{align}
where the constant $\frac{1}{(2 \pi)^{d+1}}$ appears in the inverse Fourier transform formula. 
Now, thanks to \cite[Equation (3.25) p 27]{Beals2016Heisenbergcalculus} we see that:
\begin{equation}
^t \Big( d \Psi_x  \Big) \xi=\sigma(x,\xi),
\end{equation}
where $\sigma$ is defined in \cite[Equation (5.12)]{couchet2022polyhomo}. The reader can compute the Jacobian of $\Psi_x$ and see that it is triangular and unipotent and so has determinant equal to one, see also \cite{couchet2023thesis}.
 
According to \cite[the proof p8 of Theorem 1.12]{couchet2022polyhomo}, the purely homogeneous component $u_0'$ of $u_0$ is the first term in the asymptotic expansion of the polyhomogeneous function $f=u|_{t=1}$, see \cite[Equation (3.16)]{couchet2022polyhomo} and the remarks which follow. Therefore:
\begin{equation}
u_0'(x,\xi)=p_{-(d+2)}(x,\xi),
\end{equation}
where $p_{-(d+2)}$ is the term of degree $-(d+2)$ in the asymptotic expansion of $f=u|_{t=1}$. 
We have shown that :
\begin{equation}
Res_x(P)=\frac{|d \Psi_x|}{(2 \pi)^{d+1}}  \int_{|\xi|=1} p_{-(d+2)}(x,\xi) d \sigma(\xi).
\end{equation}
\end{proof}

\begin{corollary} \label{779}
If $M$ is a contact manifold or a codimensional one foliation, then the groupoidal residue of Definition \ref{726} agree with's Ponge noncommutative residue for operators $P \in \bold{\Psi}_\V^{-d_H}(M)$, meaning that \eqref{781} still holds.
\end{corollary}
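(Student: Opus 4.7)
Both quantities $Res_x(P)$ and $c_P(x)$ are intrinsically local at $x$: the former depends only on the germ at $(x,0,0) \in \THM$ of any $r$-fibered distribution representing $P$, while the latter depends only on the principal symbol $p_{-d_H}(x,\cdot)$ and the Jacobian of a privileged coordinate chart at $x$. Thus the plan is to restrict to a coordinate neighbourhood $U$ of $x$ in $M$ and choose a privileged chart $\Psi_x : U \to V \subset \R^{d+1}$ centred at $x$, with the pushforward of a local $H$-frame coinciding at the origin with the standard Heisenberg frame of $\Hn \times \R^m$. For contact manifolds this is a Darboux-type chart, and for codimension one foliations a foliated chart; in both cases the osculating group $\mathcal{T}_HM_x$ is identified via $d\Psi_x$ with $\Hn \times \R^m$.

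I would then rerun the argument of Theorem \ref{774} essentially verbatim, at the point $\Psi_x(x) = 0$. Every ingredient of that proof is local in nature and remains available here: the exponential chart for $\THM$ from Lemma \ref{750} exists on any filtered manifold; the coincidence of the Heisenberg and groupoidal calculi is guaranteed for contact manifolds and codimension one foliations by \cite{couchet2022polyhomo}; and the fiberwise Fourier transform, the splitting $u_0 = u_0' + u_0'' + u_0'''$ of the principal symbol outside a compact neighbourhood of $0$, and Ponge's extension lemma \cite[Lemma 3.1]{ponge2007residueHeisenberg} are all intrinsic to a single fiber of the osculating bundle. The same chain of equalities used in Theorem \ref{774} then produces $Res_x(P) = \frac{1}{(2\pi)^{d+1}} \int_{|\xi|=1} u_0'(x,\xi) \, d\sigma(\xi)$, where $u_0'$ is the purely homogeneous component of the leading symbol computed in the coordinates $\Psi_x$.

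The one remaining point is to convert this expression into Ponge's formulation with the Jacobian factor $|d\Psi_x|$. This comes from tracking the change of variables between intrinsic objects on the groupoid $\THM$ and their representatives in the coordinates $\Psi_x$: specifically, the Haar measure $d\lambda_x$ on $\mathcal{T}_HM_x$ pulls back under $d\Psi_x$ to a scalar multiple of Lebesgue measure on $\R^{d+1}$, and the resulting multiplicative factor is precisely $|d\Psi_x|$. In the model case treated in Theorem \ref{774} this Jacobian equals $1$ by the triangular-unipotent argument recalled there, which is why no Jacobian appears explicitly in that proof; in the present generality it survives and recovers exactly $c_P(x)$.

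The main obstacle I anticipate is precisely this bookkeeping of the Jacobian: one must verify that the scaling factor introduced when identifying $d\lambda_x$ with Lebesgue measure via $d\Psi_x$ combines correctly with the $(2\pi)^{-(d+1)}$ from the inverse Fourier transform and with the integral over the unit sphere, matching Ponge's conventions in \cite{ponge2007residueHeisenberg} against those of \cite{Beals2016Heisenbergcalculus} and \cite{couchet2022polyhomo} as employed in Theorem \ref{774}. Once this is done, the identity $Res_x(P) = c_P(x)$ follows.
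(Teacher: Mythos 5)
Your proposal is correct and follows essentially the same route as the paper: the paper's proof likewise invokes Darboux's theorem (contact case) or Frobenius's theorem (codimension-one foliation case) to identify $M$ locally with the model space $\Hn$ or $\mathbb{H}_0\times\R^d$, and then concludes from Theorem \ref{774} together with the coordinate-independence of both residues. Your additional care with the Jacobian factor $|d\Psi_x|$ is a reasonable elaboration of what the paper compresses into the phrase ``independent of (privileged) coordinates,'' but it is not a different argument.
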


\begin{proof}
Darboux' Theorem for a contact manifold or Frobenius' Theorem for a codimensional one foliation, implies that around any $x \in M$, there is a local coordinate system which identifies $M$ with the model 
space $\Hn$, $d=2n$, or $\mathbb{H}_0 \times \R^d$. Since both Ponge's residue and the groupoidal residue are independent of (priveleged) coordinates, the result follows.
\end{proof}

We conclude by proving the equality of the groupoidal residue with Dave and Haller's noncommutative residue.  Let $M$ be a closed filtered manifold.  Recall that in this context, the appropriate analogue of an elliptic operator is a Rockland operator, as follows.  At each point $x\in M$, a differential operator $D$ of filtered order $m$ has a principal part $\sigma^m_x(D)$ which is a left-invariant homogeneous differential operator on the osculating group $\mathcal{T}_HM_x$ and so an element of the enveloping algebra $\mathcal{U}(\mathfrak{t}_HM_x)$.  The operator is Rockland if for every $x\in M$ and every non trivial irreducible unitary representation of $\mathcal{T}_HM_x$, the operator $\pi(\sigma^m_x(D))$ is invertible.

Fix a strictly positive Rockland differential operator $D$ on $M$ of even order $r>0$ with respect to the filtration.  If $P\in\Psi^m_H(M)$ is a pseudodifferential operator in the groupoidal calculus of \cite{Yuncken2019groupoidapproach}, then Dave and Haller \cite{DH} prove that the zeta function
\[
  \zeta_P(z) = \mathrm{Tr}(PD^{-z})
\]
is well-defined for $\Re(z)>(n+k)/r$ and admits a meromorphic extension with at most simple poles located at $(d_H+m-j)/r$ with $j\in\N$.  They then define a noncommutative residue $\tau$ by
\begin{equation}
\label{eq:Dave-Haller-residue}
  \tau(P) = r.\mathrm{res}_{z=0}(\zeta_P(z)).
\end{equation}

\begin{theoreme}
\label{thm:Dave-Haller-residue}
Let $M$ be a closed filtered manifold and $P\in\Psi^m_H(M)$.  If the order of $P$ is $m\leq-d_H$, then the noncommutative residue \eqref{eq:Dave-Haller-residue} of Dave-Haller is equal to the integral of the groupoidal residue of Definition \ref{726},
\[
  \tau(P) = \int_M Res_x(P).
\]
\end{theoreme}

\begin{proof}
Let $k\in\mathcal{E}'_r(M\times M)$ be the kernel of $P$.  It suffices to consider the case when $m=-d_H$, since otherwise both residues are $0$.  Thus there is $\kg \in \mathcal{E}'_r(\mathbb{T}_HM)$ essentially homogeneous of order $m=-d_H$ for the Debord-Skandalis action, such that $\kg|_{t=1} = k$.   The restriction $k_0 := \kg|_{t=0} \in \mathcal{E}'_r(\mathcal{T}_HM)$ is a fibred distribution which is homogeneous of order $-d_H$ modulo $C^\infty$, with respect to the pushforward of distributions by the graded dilations $\delta_s$.  Specifically, it is the first term in the pseudo-homogeneous expansion of the kernel $k$, see \emph{e.g.} \cite[Remark 3.4]{DH:BGG}.  Interpreting $k_0$ as a generalized function,
it is pseudohomogeneous function of degree $0$, with the degree shift of $d_H$ as explained in Remark \ref{rmk:degree_shift}.  Explicitly, this means that 
\[
  k_0(x,\xi) = f_0(x,\xi) + \log(|\xi|) p_0(x)
\]
for some $f_0$ homogeneous of order $0$ and $p_0\in C^\infty(M)$, see \eqref{eq:Psihf}.

Now, according to Dave-Haller \cite[Corollary 6(d)]{DH},
\[
  \tau(P) = \int_M p_0(x) \, dx.
\]
On the other hand, a direct calculation of the cocycle \eqref{793} shows that the groupoid residue at a point $x\in M$ is given by 
\[
Res_x P = 
\frac{1}{\log(s)} \left( \log|\delta_s(\xi)|\, p_0(x) - \log|\xi|\, p_0(x) \right)|_{\xi=0}= 
p_0(x),
\]
in an analogous fashion to the proof of Lemma \ref{754}, see Equation \eqref{eq:cocycle_calculation} and following.
The result follows.
\end{proof}

\bibliographystyle{plain}


\bibliography{bibarticle}

\end{document}